\documentclass{base_class}

\title[]{On a generalization of Godbersen's conjecture} 

\author{Jan Kotrbat{\'y}}\thanks{Supported by Charles University grants PRIMUS/24/SCI/009 and UNCE/24/SCI/022.}
 
\address{Charles University, Faculty of Mathematics and Physics, Mathematical Institute of Charles University, Sokolovsk\'a 49/83, 186 00 Prague, Czechia}
\email{kotrbaty@karlin.mff.cuni.cz}
\subjclass[2020]{52A40, 52A39, 52B45, 52B12}
\date{\today}

\begin{document}

\maketitle

\begin{abstract} 
The long-standing Godbersen's conjecture asserts that the Rogers--Shephard inequality for the volume of the difference body is refined by an inequality for the mixed volume of a convex body and its reflection about the origin. The conjecture is known in several special cases, notably for anti-blocking convex bodies. In this note, we propose a generalization of Godbersen's conjecture that refines Schneider's generalization of the Rogers--Shephard inequality to higher-order difference bodies  and prove our conjecture for anti-blocking convex bodies. Moreover, we relate the conjectured inequality to the higher-rank mixed volume defined by the author and Wannerer which leads to an equivalent formulation in terms of the Alesker product of smooth, translation invariant valuations.
\end{abstract}

\section{Introduction}

The difference body $DK$ of a convex body $K\subset\RR^n$ is, by definition, the Minkowski sum of $K$ with its reflection $-K$ about the origin. Due to a classical result of Rogers and Shephard \cite{RogersShephard57,RogersShephard58}, the volume of the difference body is bounded above in terms of the volume of $K$ as follows:
\begin{align}
\label{eq:RS}
\vol_n(DK)\leq{2n\choose n}\vol_n(K).
\end{align}
Besides bodies with empty interior, equality in \eqref{eq:RS} is attained if and only if $K$ is a simplex.

When restricted to convex bodies, the Lebesgue measure $\vol_n$ polarizes with respect to the Minkowski addition. This yields a unique $n$-linear symmetric function $V$, called mixed volume, satisfying $V(K,\dots,K)=\vol_n(K)$, see Section \ref{s:background} for an explicit definition. Expanding the left-hand side of \eqref{eq:RS} by linearity and the right-hand side using the identity ${2n\choose n}=\sum_{k=0}^n{n\choose k}^2$, one obtains an inequality between sums of length $n+1$. It was conjectured by Godbersen \cite{Godbersen} in 1938 that the inequality in fact holds term-wise. More precisely, denoting $k$ copies of the same convex body $K$ by $K[k]$, one has the following:
\begin{conjecture}[Godbersen]
\label{Godbersen}
For each convex body $K\subset\RR^n$ and each $0\leq k\leq n$ one has
\begin{align}
\label{eq:Godbersen}
V(-K[k],K[n-k])\leq  {n\choose k}\vol_n(K).
\end{align}
Moreover, if $0<k<n$ and $\interior K\neq\emptyset$, equality holds if and only if $K$ is a simplex.
\end{conjecture}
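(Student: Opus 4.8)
\medskip
\noindent The plan toward Conjecture~\ref{Godbersen} is as follows. The inequality \eqref{eq:Godbersen} is affinely invariant and holds trivially (with equality) for $k\in\{0,n\}$, and the cases $k\in\{1,n-1\}$ are classical; so the first goal is to establish it for \emph{anti-blocking} bodies, i.e.\ convex bodies $K\subseteq[0,\infty)^n$ with the property that $y\in K$ whenever $0\le y\le x$ coordinatewise for some $x\in K$. (By affine invariance this already yields \eqref{eq:Godbersen} for every affine image of such a body, in particular for every simplex, so that the sharpness part comes along for free.) For $I\subseteq\{1,\dots,n\}$ put $E_I=\mathrm{span}\{e_i:i\in I\}$, let $P_{E_I}$ be the orthogonal projection onto $E_I$, and note that the anti-blocking hypothesis yields the identity $P_{E_I}K=K\cap E_I$.

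The first step is a decomposition formula. Splitting $\RR^n$ into its $2^n$ closed coordinate orthants and using the anti-blocking property, one verifies that the orthant in which precisely the coordinates indexed by $I$ are nonnegative meets $K+(-tK)$ in exactly $P_{E_I}K+\bigl(-tP_{E_{I^c}}K\bigr)$ for every $t>0$; taking $n$-volumes and summing over $I$ gives $\vol_n(K-tK)=\sum_I t^{n-|I|}\vol_{|I|}(P_{E_I}K)\,\vol_{n-|I|}(P_{E_{I^c}}K)$. Matching the coefficient of $t^k$ against the expansion $\vol_n(K-tK)=\sum_j\binom nj t^j\,V(-K[j],K[n-j])$ yields
\begin{align}\label{eq:decomp}
\binom nk\,V(-K[k],K[n-k])=\sum_{|J|=k}\vol_k(P_{E_J}K)\,\vol_{n-k}(P_{E_{J^c}}K).
\end{align}

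With \eqref{eq:decomp} at hand, the conjecture for anti-blocking bodies becomes a sum of $\binom nk$ classical inequalities. Using $P_{E_{J^c}}K=K\cap E_{J^c}$, each summand on the right of \eqref{eq:decomp} equals $\vol_k(P_{E_J}K)\,\vol_{n-k}(K\cap E_{J^c})$, and the Rogers--Shephard projection--section inequality for the complementary subspaces $E_J$ and $E_{J^c}$ bounds it above by $\binom nk\vol_n(K)$; summing over the $\binom nk$ subsets $J$ with $|J|=k$ gives $\binom nk V(-K[k],K[n-k])\le\binom nk^2\vol_n(K)$, which is \eqref{eq:Godbersen}. For the equality case ($0<k<n$, $\interior K\neq\emptyset$) one must have equality in the Rogers--Shephard inequality for \emph{every} coordinate subspace $E_J$ of dimension $k$ at once; since a single such equality already forces $K$ to have, relative to the splitting $\RR^n=E_J\oplus E_{J^c}$, the rigid ``join'' structure of the Rogers--Shephard equality case, imposing this for all $J$ should leave only simplices, which I would confirm by induction on $n$.

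The real obstacle is the conjecture for general $K$: there is no orthant decomposition to feed an analogue of \eqref{eq:decomp}, and the existing term-by-term refinements of \eqref{eq:RS}---shadow systems, covariogram estimates, and the quantitative bounds of Artstein-Avidan, Einhorn, Florentin and Ostrover---do not reach the conjectured constant $\binom nk$. The most promising alternative seems to be the valuation-theoretic reformulation announced in the abstract: express $V(-K[k],K[n-k])$ through the higher-rank mixed volume of the author and Wannerer and recast \eqref{eq:Godbersen} as a positivity statement for an Alesker product of smooth, translation-invariant valuations, thereby shifting the whole difficulty onto establishing that positivity.
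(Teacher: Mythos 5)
You have not proved the statement, and you could not have from the paper: Conjecture~\ref{Godbersen} is presented there as an open conjecture, with no proof for general convex bodies. Your proposal is explicit about this, so the decisive gap is simply that everything beyond the anti-blocking class is a plan rather than an argument; your closing suggestion to recast \eqref{eq:Godbersen} via higher-rank mixed volumes and the Alesker product is exactly what the paper's Proposition~\ref{pro:equiv} does, but that step is a reformulation, not a proof. What your sketch does establish is the anti-blocking case, which in the paper is the $p=1$ instance of Theorem~\ref{thm:main} (and is originally due to Artstein-Avidan, Sadovsky, and Sanyal).

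For that anti-blocking case your route is essentially the paper's, specialized to $p=1$: your orthant decomposition of $K-tK$ and the resulting identity \eqref{eq:decomp} are the $p=1$ case of Lemma~\ref{lem:union} and of the sum \eqref{eq:sumsigma} (the paper decomposes into the pieces $K_\tau - \Delta_pK_\sigma$ with $\sigma,\tau$ disjoint, which up to boundary sets of measure zero is your orthant statement; note your word ``exactly'' ignores those boundaries, harmlessly). Where you invoke the classical Rogers--Shephard projection--section inequality for the complementary coordinate subspaces $E_J, E_{J^c}$ (legitimate here since $P_{E_{J^c}}K=K\cap E_{J^c}$ and the central section is maximal), the paper symmetrizes $K$ to $\hat K$ and applies the dual Bollob\'as--Thomason inequality (Theorem~\ref{thm:dualBT}); for $p=1$ these amount to the same bound, and the heavier tool is needed only because the paper treats all $p$ simultaneously. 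Two soft spots remain in your sketch: ``sharpness comes along for free'' still requires verifying that a simplex attains equality (a direct computation, e.g.\ each term in \eqref{eq:decomp} equals $\frac{1}{k!(n-k)!}$ for the standard simplex), and your equality-case argument (``equality for every $J$ should leave only simplices, by induction on $n$'') is only a heuristic; the paper instead uses the Boroczky--Kalantzopoulos--Xi characterization (Theorem~\ref{thm:equality}), which for a $1$-uniform cover says $K=\conv\{K\cap E_J, K\cap E_{J^c}\}$, and then a short combinatorial argument varying the cover to force $K$ to be a simplex. If you want a complete write-up of the anti-blocking case, that equality analysis is the piece you still need to supply.
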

Conjecture \ref{Godbersen} is known only in certain special cases. The case $k\in\{0,n\}$ is trivial. The case $k\in\{1,n-1\}$ follows from a result of Schneider \cite{Schneider09}. For general $k$, the conjecture was verified for bodies of constant width by Godbersen \cite{Godbersen}. Much more recently, Artstein-Avidan, Sadovsky, and Sanyal \cite{ArtsteinSadovskySanyal23} gave a proof for anti-blocking bodies which was generalized to locally anti-blocking bodies by Sadovsky \cite{Sadovsky25}. Artstein-Avidan, Einhorn, Florentin, and Ostrover \cite{ArtsteinETAL15} proved a related statement that in particular implies \eqref{eq:Godbersen} up to a factor $\sqrt n$. Very recently, their result was improved by Artstein-Avidan and Putterman \cite{ArtsteinPutterman24}.

A different generalization of the Rogers--Shephard inequality was considered by Schneider \cite{Schneider70}. Observing that $DK=\{x\in\RR^n\mid K\cap (K+x)\neq\emptyset\}$, Schneider defined for each $p\in\NN$ the higher-order difference body $D_pK=\{(x_1,\dots,x_p)\in(\RR^n)^p\mid K\cap (K+x_1)\cap\cdots\cap (K+x_p)\neq\emptyset\}$
and proved that
\begin{align}
\label{eq:Schneider}
\vol_{pn}(D_pK)\leq {pn+n\choose n}\vol_n(K)^p
\end{align}
with equality if and only if $K$ is a simplex (unless $\interior K=\emptyset$). An alternative proof of \eqref{eq:Schneider} was recently given by Haddad, Langharst, Putterman, Roysdon, and Ye \cite{HaddadETAL23} who also extended Schneider's generalization of the difference body to other important convex-geometric constructions and proved the respective inequalities. Functional versions of these inequalities were subsequently proven by Langharst, Sola, and Ulivelli \cite{LangharstSolaUlivelli24} and by Langharst, Putterman, Roysdon, and Ye \cite{LangharstETAL24}.

It is easily seen that the (original) Minkowski sum presentation of $DK$ generalizes to the higher-order difference bodies. Namely, one has $D_pK=\Delta_pK-K^p$, where $\Delta_p:\RR^n\to(\RR^n)^p$ is the diagonal embedding and $K^p=K\times\cdots\times K$ the $p$-ary Cartesian power of $K$, see Proposition \ref{pro:DpK}. Motivated by this observation, we propose---in the same way as Godbersen's conjecture refines the Rogers--Shephard inequality---the following refinement of Schneider's inequality:
\begin{conjecture}
\label{conj1}
For each convex body $K\subset\RR^n$, $0\leq k\leq n$, and $p\in\NN$ one has
\begin{align}
\label{eq:conj1}
V(-\Delta_pK[k],K^p[pn-k])\leq {n\choose k}\vol_n(K)^p.
\end{align}
Moreover, if $0<k$ and $\interior K\neq\emptyset$, equality holds if and only if $K$ is a simplex.
\end{conjecture}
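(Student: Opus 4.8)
The plan is to split \eqref{eq:conj1} according to $k$ and to the class of $K$, starting from the volume-polynomial identity that makes the refinement structure explicit. Since $D_pK=\Delta_pK-K^p$ and $\Delta_pK$ is supported on the $n$-dimensional diagonal subspace $L:=\Delta_p(\RR^n)\subset(\RR^n)^p$, expanding $\vol_{np}\bigl((-\Delta_pK)+rK^p\bigr)$ by multilinearity of $V$ and discarding the vanishing terms with more than $n$ copies of $\Delta_pK$ gives
\begin{equation*}
\vol_{np}\bigl((-\Delta_pK)+rK^p\bigr)=\sum_{k=0}^{n}\binom{np}{k}\,r^{np-k}\,V\bigl(-\Delta_pK[k],K^p[np-k]\bigr),\qquad r>0.
\end{equation*}
At $r=1$ this reads $\vol_{np}(D_pK)=\sum_k\binom{np}{k}V(-\Delta_pK[k],K^p[np-k])$, and $\binom{np+n}{n}=\sum_k\binom{np}{k}\binom nk$ by Vandermonde, so \eqref{eq:conj1} is precisely the term-wise form of \eqref{eq:Schneider}; for $p=1$ it is Conjecture \ref{Godbersen}, and $k=0$ is trivial. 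I would also record that \eqref{eq:Schneider} itself follows, as in \cite{Schneider70,HaddadETAL23}, by applying Berwald's inequality to the function $F(x_1,\dots,x_p)=\vol_n\bigl(\bigcap_{i=1}^p(K+x_i)\bigr)$ on $D_pK$ — whose $n$-th root is concave — together with $\int_{D_pK}F=\vol_n(K)^{p+1}$; the aim is to refine this step.

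Next I would dispose of the extreme values of $k$ by reduction to $p=1$. The top-order surface area measure of the box $K^p$ is the sum, over the $p$ coordinate copies of $S^{n-1}\subset S^{np-1}$, of $\vol_n(K)^{p-1}$ times the surface area measure of $K$, and $h_{-\Delta_pK}(0,\dots,u,\dots,0)=h_K(-u)$; this yields the identity $V(-\Delta_pK,K^p[np-1])=\vol_n(K)^{p-1}\,V(-K,K[n-1])$, so Schneider's bound \cite{Schneider09} for the $p=1$, $k=1$ case gives \eqref{eq:conj1} for $k=1$ (and an analogous mixed-surface-measure computation should settle $k=n-1$). For $k=n$ the subspace formula for mixed volumes — bodies supported on a subspace split off a product when their number equals the dimension of the subspace — reduces \eqref{eq:conj1} to the shadow inequality $\vol_{np-n}\bigl(K^p\mid L^\perp\bigr)\le\binom{np}{n}p^{-n/2}\vol_n(K)^{p-1}$; since $K^p\mid L^\perp=-(D_pK\mid L^\perp)$, this is a projection version of \eqref{eq:Schneider}, and for $p=2$ it is exactly \eqref{eq:RS}.

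For the remaining range $2\le k\le n-2$ I would first treat the anti-blocking bodies $K\subset\RR^n_{\ge 0}$, adapting the combinatorial decomposition used for $p=1$ by Artstein-Avidan--Sadovsky--Sanyal \cite{ArtsteinSadovskySanyal23} and Sadovsky \cite{Sadovsky23}: one expects $\vol_{np}\bigl((-\Delta_pK)+rK^p\bigr)$ to split into a sum over coordinate subsets $S\subseteq\{1,\dots,n\}$ of contributions $r^{np-|S|}$ times products of volumes of sections $K\cap\RR^S$ and projections $K\mid\RR^{S^c}$, so that reading off the $r^{np-k}$-coefficient, bounding each factor by $\vol_n(K)$, and counting the distributions across the $p$ coordinate blocks produces $\binom nk$ and proves \eqref{eq:conj1} in this class. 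For a general convex body the conjecture is open, and the natural goal is a mixed-volume-sensitive strengthening of the Berwald step above that extracts the individual $r^{np-k}$-coefficients of $\vol_{np}((-\Delta_pK)+rK^p)$; since for $p=1$ this is precisely Conjecture \ref{Godbersen}, this is the main obstacle. The standard tools do not help: Alexandrov--Fenchel points the wrong way, and the subspace-formula reduction only gives $V(-\Delta_pK[k],(\Delta_pK)[n-k],K^p[np-n])=\binom{np}{n}^{-1}\vol_{np-n}(K^p\mid L^\perp)\,p^{n/2}\,V(-K[k],K[n-k])$, which — as $\Delta_pK\subseteq K^p$ forces $V(-\Delta_pK[k],(\Delta_pK)[n-k],K^p[np-n])\le V(-\Delta_pK[k],K^p[np-k])$ by monotonicity — under-estimates the quantity in \eqref{eq:conj1}; induction on $p$ stalls at the open base case $p=1$. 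A more promising avenue, and the one I would ultimately pursue, is to reformulate \eqref{eq:conj1} through the higher-rank mixed volume as an inequality for the Alesker product of smooth translation-invariant valuations and attack it with the tools of that theory.

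Finally, the equality characterization. For anti-blocking $K$, equality in \eqref{eq:conj1} propagates to equality in each section and projection estimate $\vol_n(K\cap\RR^S)\le\vol_n(K)$, forcing $K$ to be a coordinate simplex. In general, equality in \eqref{eq:conj1} for some $0<k\le n$ forces, via \eqref{eq:Schneider} (and, for $k=n$, $p=2$, via \eqref{eq:RS}), equality in the corresponding classical inequality, whose only extremisers are simplices. Note that for $p\ge2$ the case $k=n$ is already shape-sensitive — the shadow inequality above is strict unless $K$ is a simplex — which is why the hypothesis reads $0<k$ rather than $0<k<n$ as in Conjecture \ref{Godbersen}.
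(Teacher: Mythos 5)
The statement you were asked about is a conjecture, and the paper does not prove it in general either: what the paper actually establishes is (i) that \eqref{eq:conj1} refines Schneider's inequality (Proposition \ref{pro:stronger1}), (ii) the anti-blocking case, proved via the stronger Conjecture \ref{conj2} (Theorem \ref{thm:main}, Proposition \ref{pro:stronger2}), and (iii) an equivalent formulation through higher-rank mixed volumes and the Alesker product. Your proposal correctly recognizes the general case as open, and your special-case reductions are essentially sound where you carry them out: the identity $V(-\Delta_pK,K^p[np-1])=\vol_n(K)^{p-1}V(-K,K[n-1])$ does settle $k=1$ via Schneider's result, and for $k=n$ the quotient of $K^p$ by the diagonal is (a reflection of) $D_{p-1}K$, so your ``shadow inequality'' is exactly Schneider's inequality of order $p-1$ and the case closes. (Your claim that $k=n-1$ follows ``analogously'' is unsupported, though: for $p\geq 2$ there is no $K\mapsto -K$ symmetry exchanging $k$ and $n-k$.)

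The genuine gap is in the one nontrivial case the paper does settle, the anti-blocking bodies. Your plan---decompose $\vol_{np}(-\Delta_pK+rK^p)$ over coordinate subsets and then ``bound each factor by $\vol_n(K)$''---fails at the bounding step. After the decomposition (the paper's Lemma \ref{lem:union}, a disjoint union indexed by $p$-uniform covers $\sigma_0,\dots,\sigma_p$ of $\{1,\dots,n\}$) and the exact-sequence splitting of the mixed volume (Lemma \ref{lem:exact}), each summand is an explicit constant times $\vol_k(K\cap E_{\sigma_0})\prod_{i=1}^p\vol_{n-k_i}(K\cap E_{\sigma_i})$, a product of volumes of sections of varying dimensions; such factors are not individually comparable to $\vol_n(K)$ (the comparison is not even scale-invariant). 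The essential ingredient is an inequality of dual Loomis--Whitney type for this product of section volumes: the paper applies Liakopoulos' dual Bollob\'as--Thomason inequality (Theorem \ref{thm:dualBT}) to the symmetrized body $\hat K$, and for the equality case the Boroczky--Kalantzopoulos--Xi characterization (Theorem \ref{thm:equality}) plus a further combinatorial argument with induced $1$-uniform covers. Without this, the $p=1$ argument of Artstein-Avidan--Sadovsky--Sanyal does not ``adapt'' by bookkeeping alone; note also that the summands naturally carry a splitting $k=k_1+\cdots+k_p$ across the blocks, which is why the paper is forced to (and does) prove the stronger Conjecture \ref{conj2} and then deduce \eqref{eq:conj1} by a multinomial identity. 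Finally, your equality discussions are flawed: equality in a single term of a termwise refinement does not force equality in the aggregated inequality \eqref{eq:Schneider}, so the ``via \eqref{eq:Schneider}'' argument is invalid, and the claimed estimate $\vol_n(K\cap\RR^S)\leq\vol_n(K)$ for anti-blocking bodies is dimensionally meaningless; the paper's simplex characterization instead passes through the equality cases of the dual Bollob\'as--Thomason inequality.
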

Observe that Conjecture \ref{conj1} subsumes Godbersen's conjecture for $p=1$. In this connection, let us also mention that a different generalization of Conjecture \ref{Godbersen} was proposed in \cite{ArtsteinETAL15}. The fact that \eqref{eq:conj1} implies \eqref{eq:Schneider} follows at once from multilinearity of the mixed volume and an elementary combinatorial identity, see Section \ref{s:genealogy}. The generalizing procedure can be further iterated using that $K^p=\sum_{j=1}^p\iota_jK$, where $\iota_j:\RR^n\to(\RR^n)^p$ is the inclusion into the $j$-th factor, along with another combinatorial identity. This leads us to the following more fundamental conjecture that strengthens Conjecture \ref{conj1}:
\begin{conjecture}
\label{conj2}
Let $K\subset\RR^n$ be a convex body. For any $p\in\NN$, $0\leq k\leq n$, and $0\leq k_i\leq n$, $i=1,\dots, p$, such that $k_1+\cdots+k_p=k$ one has
\begin{align}
\label{eq:conj2}
V(-\Delta_pK[k],\iota_1K[n-k_1],\dots,\iota_pK[n-k_p])\leq {n\choose k}{k\choose k_1,\dots,k_p}\frac{(n!)^p}{(pn)!}\vol_n(K)^p.
\end{align}
Moreover, if $k_i<n$ for all $i$, $0<k$, and $\interior K\neq\emptyset$, equality holds if and only if $K$ is a simplex.
\end{conjecture}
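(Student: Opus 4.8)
The plan is to attack \eqref{eq:conj2} through the algebra of smooth translation invariant valuations rather than by direct volumetric estimates, exploiting that its left-hand side is a highly structured mixed volume. \textbf{Step 1: an intrinsic reformulation.} The quantity $V(-\Delta_pK[k],\iota_1K[n-k_1],\dots,\iota_pK[n-k_p])$ is a mixed volume in $\RR^{np}$ of bodies living on the $n$-dimensional coordinate subspaces $\iota_j(\RR^n)$ and on the diagonal subspace $\Delta_p(\RR^n)$, each of which is a linear image of $K$. I would first rewrite it as a functional of $K$ alone: unfolding the mixed volume along these subspaces should identify it, up to a universal combinatorial factor, with the higher-rank mixed volume of $K$ defined by the author and Wannerer. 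Evaluating the same unfolding on the standard simplex should reproduce exactly the constant $\binom{n}{k}\binom{k}{k_1,\dots,k_p}\frac{(n!)^p}{(np)!}$ on the right-hand side of \eqref{eq:conj2}; carrying out this computation proves that \emph{equality holds whenever $K$ is a simplex} and certifies that the conjectured constant is correct.

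\textbf{Step 2: passage to the Alesker product, and positivity.} Via the correspondence between the higher-rank mixed volume and products of valuations, the functional from Step~1 is encoded by an Alesker product of smooth translation invariant valuations attached to $K$ and to its reflection $-K$, paired under Poincar\'e duality against $K$. Thus \eqref{eq:conj2} becomes the statement that a specific component of this Alesker product is dominated by the corresponding component of the product built from $\vol_n$. To extract the bound I would look for a positivity principle in the valuation algebra: most promisingly, the Hodge--Riemann-type bilinear relations for valuations, which furnish Alexandrov--Fenchel-style inequalities and would let one write the difference between the two sides of \eqref{eq:conj2} as a nonnegative Poincar\'e pairing degenerating precisely along the stratum of simplices; an alternative would be a monotonicity argument deforming $K$ to a simplex.

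\textbf{Step 3: rigidity, and the main obstacle.} Under the hypotheses $0<k$, $k_i<n$ for all $i$, and $\interior K\neq\emptyset$, the equality case would then follow from the rigidity half of the inequality invoked in Step~2 --- a degenerate pairing, or a stationary point of the deformation, forcing $K$ into the extremal stratum, after which the classical analysis of equality in the Rogers--Shephard and Alexandrov--Fenchel inequalities identifies $K$ as a simplex. The genuine difficulty is the positivity input in Step~2: for $p=1$ the inequality \eqref{eq:conj2} is exactly Godbersen's Conjecture~\ref{Godbersen}, open since 1938, so the reduction to the Alesker product clarifies the problem without bypassing it. The crux is to find the correct nonnegativity statement for products of valuations built from a body together with its reflection --- here one might try to exploit the coincidences among the mixed valuations forced by the presence of $-K$ --- or, failing a general argument, to enlarge the class of bodies beyond the anti-blocking ones on which the Alesker-product inequality can be verified directly and then propagated by approximation.
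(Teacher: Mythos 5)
Your proposal does not prove the statement, and the gap is the one you yourself name at the end: the positivity input in Step~2 does not exist. Steps~1--2 amount to rewriting the left-hand side of \eqref{eq:conj2} as a rank-$p$ mixed volume and then as an Alesker product of the valuations $V(\Cdot[k_i],K[n-k_i])$; this is precisely the content of Proposition~\ref{pro:equiv}, which the paper presents explicitly as an \emph{equivalent reformulation} (Conjecture~\ref{conj3}), not as a route to a proof. No known Hodge--Riemann-type statement for $\Val^\infty(\RR^n)$ yields the multilinear bound $\phi_0\cdots\phi_p\leq\vol_n(K)^p\vol_n$: the Hodge--Riemann relations produce quadratic, Alexandrov--Fenchel-style inequalities between pairings of two valuations, and for $p=1$ the desired bound is exactly Godbersen's Conjecture~\ref{Godbersen}, open since 1938. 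Since Step~3 is contingent on Step~2, the equality analysis also collapses. Moreover, your picture of equality ``degenerating along the stratum of simplices'' inside the valuation formulation is not tenable as stated: Conjecture~\ref{conj3} is formulated for $K\in\calK_+^\infty(\RR^n)$, simplices are not in this class, and the paper notes that if Conjecture~\ref{conj2} holds then equality is \emph{never} attained in \eqref{eq:conj3}; equality cases would have to be treated at the level of the mixed-volume statement after a limiting argument, which your sketch does not supply. Finally, verifying equality for the simplex (your Step~1) only certifies sharpness of the constant; it contributes nothing to the inequality or to the ``only if'' direction.

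For comparison, the paper does not claim a proof of Conjecture~\ref{conj2} in general; its positive result (Theorem~\ref{thm:main}) covers anti-blocking bodies and proceeds by an entirely different, concrete mechanism: the disjoint decomposition of $-\Delta_pK+\iota_1K+\cdots+\iota_pK$ over $p$-uniform covers (Lemma~\ref{lem:union}), reduction via the exact-sequence Lemma~\ref{lem:exact} of each piece to a product $\vol_k(K_{\sigma_0})\prod_{i}\vol_{n-k_i}(K_{\sigma_i})$ of volumes of coordinate sections, and then the dual Bollob\'as--Thomason inequality (Theorem~\ref{thm:dualBT}) together with its equality characterization (Theorem~\ref{thm:equality}) to obtain both the bound and the simplex rigidity. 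If you want an unconditional statement, you must either supply the missing positivity principle for products of valuations built from $K$ and $-K$ or engage with a geometric mechanism of this kind; the algebraic reformulation alone, as you concede, clarifies the problem without bypassing it.
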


Our main result establishes these conjectures for the class of anti-blocking convex bodies. Recall that a convex body is called anti-blocking if it is contained in the positive orthant and its projection to each coordinate subspace equals the section with the subspace, see Section \ref{s:background}.

\begin{theorem}
\label{thm:main}
Conjecture \ref{conj2} (and hence Conjecture \ref{conj1}) is true if the convex body $K$ is assumed to be anti-blocking.
\end{theorem}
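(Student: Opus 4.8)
The plan is to reduce \eqref{eq:conj2} for anti-blocking $K$ to a sharp \emph{reverse Loomis--Whitney inequality} for coordinate sections, by first computing the left-hand side explicitly.

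\smallskip

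\noindent\textit{Step 1: an exact formula for the mixed volume.} For $\lambda_0,\dots,\lambda_p>0$ consider $B_\lambda:=-\lambda_0\Delta_pK+\sum_{i=1}^p\lambda_i\iota_iK\subset(\RR^n)^p$. On the one hand $\vol_{pn}(B_\lambda)$ is a polynomial in $\lambda$ whose coefficient of $\lambda_0^{k}\lambda_1^{n-k_1}\cdots\lambda_p^{n-k_p}$ is $\binom{pn}{k,n-k_1,\dots,n-k_p}$ times the mixed volume in \eqref{eq:conj2}. On the other hand $B_\lambda=\bigcup_{x\in K}\prod_{i=1}^p(\lambda_iK-\lambda_0x)$, and since $K$ is anti-blocking (contained in $\RR^n_{\ge0}$ and downward-closed there) one can stratify $B_\lambda$ according to the sign/minimum pattern of the coordinates of a point: on the stratum indexed by an ordered partition $[n]=T_0\sqcup T_1\sqcup\dots\sqcup T_p$ the ``hidden'' vector $x$ is forced to its coordinatewise minimal admissible value, and an explicit affine change of variables maps the stratum bijectively, with constant Jacobian, onto $K_{[n]\setminus T_0}\times K_{[n]\setminus T_1}\times\dots\times K_{[n]\setminus T_p}$, where $K_W:=K\cap\RR^W$. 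Matching coefficients yields, for anti-blocking $K$,
\begin{align*}
V(-\Delta_pK[k],\iota_1K[n-k_1],\dots,\iota_pK[n-k_p])=\frac{1}{\binom{pn}{k,n-k_1,\dots,n-k_p}}\sum_{\substack{[n]=T_0\sqcup\dots\sqcup T_p\\ |T_0|=n-k,\ |T_i|=k_i}}\ \prod_{i=0}^p\vol_{n-|T_i|}\bigl(K_{[n]\setminus T_i}\bigr);
\end{align*}
for $p=1$ this is the identity underlying the Artstein-Avidan--Sadovs
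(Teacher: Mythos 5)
Your Step 1 is, in substance, exactly the decomposition the paper uses: your sum over ordered partitions $[n]=T_0\sqcup\cdots\sqcup T_p$ with $|T_0|=n-k$, $|T_i|=k_i$ is (after passing to complements $\sigma_j=[n]\setminus T_j$) the paper's sum over $p$-uniform covers, and the claimed identity
\begin{align*}
V(-\Delta_pK[k],\iota_1K[n-k_1],\dots,\iota_pK[n-k_p])=\binom{pn}{k,n-k_1,\dots,n-k_p}^{-1}\sum_{\sigma_0,\dots,\sigma_p}\vol_k(K\cap E_{\sigma_0})\prod_{i=1}^p\vol_{n-k_i}(K\cap E_{\sigma_i})
\end{align*}
is correct. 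Be aware, though, that the stratification you sketch (``the hidden vector $x$ is forced to its minimal admissible value'' and ``constant Jacobian onto a product of sections'') is the delicate part: the paper devotes Lemma \ref{lem:union} to proving that the pieces $L_\tau-\Delta_pK_\sigma$ really cover and are pairwise disjoint, and then factorizes each full-dimensional piece via the exact-sequence Lemma \ref{lem:exact} (the relevant coordinate subspaces $\Delta_pE_{\sigma_0},\iota_1E_{\sigma_1},\dots,\iota_pE_{\sigma_p}$ are complementary with unimodular change of basis). Your sketch would need to be fleshed out to that level, but it is the same idea.

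The genuine gap is that the proof stops before the decisive step. Having reduced to the displayed sum, you still need, for each partition, the inequality
\begin{align*}
\vol_k(K\cap E_{\sigma_0})\prod_{i=1}^p\vol_{n-k_i}(K\cap E_{\sigma_i})\leq\frac{(n!)^p}{k!\,(n-k_1)!\cdots(n-k_p)!}\,\vol_n(K)^p,
\end{align*}
together with the count $\binom{n}{n-k,k_1,\dots,k_p}$ of admissible partitions, to assemble the right-hand side of \eqref{eq:conj2}. You announce this as a ``sharp reverse Loomis--Whitney inequality for coordinate sections'' but never state, prove, or cite it, and it is not a formal consequence of Step 1: it is a nontrivial recent theorem. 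The paper obtains it from Liakopoulos' dual Bollob\'as--Thomason inequality (Theorem \ref{thm:dualBT}) applied to the unconditional reflection $\hat K$ of $K$ (noting $\vol_{|\sigma|}(\hat K\cap E_\sigma)=2^{|\sigma|}\vol_{|\sigma|}(K\cap E_\sigma)$ and that the $\sigma_j$ form a $p$-uniform cover). Moreover, Theorem \ref{thm:main} asserts Conjecture \ref{conj2} including its equality case, and your proposal does not address equality at all; in the paper this requires the Boroczky--Kalantzopoulos--Xi characterization (Theorem \ref{thm:equality}) plus an additional combinatorial argument comparing different $1$-uniform covers to force $K$ to be a simplex. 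As it stands, the proposal is an incomplete rederivation of the paper's first half.
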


The proof of Theorem \ref{thm:main} depends on the dual Bollob\'as–Thomason inequality recently proven by Liakopoulos \cite{Liakopoulos19} and on the characterization of its equality cases due to Boroczky, Kalantzopoulos, and Xi \cite{BKX23}. Let us also mention that other geometric inequalities for (locally) anti-blocking bodies were proven by Artstein-Avidan, Sadovsky, and Sanyal \cite{ArtsteinSadovskySanyal23}, Manui, Ndiaye, and Zvavitch \cite{ManuiNdiayeZvavitch24}, and Sanyal and Winter \cite{SanyalWinter25}.

A new, fruitful approach to geometric inequalities was recently discovered to be based on Alesker's algebraic theory of valuations \cite{Alesker01,Alesker04,BernigFu06}. A central object here is the algebra $\Val^\infty(\RR^n)$ of smooth, translation invariant valuations, see Section \ref{s:background} for a precise definition. As prominent examples of such valuations are mixed volumes with fixed convex bodies that have smooth boundary and positive curvature, it turns out that various inequalities between mixed volumes can be formulated in terms of the Alesker product or the Bernig--Fu convolution on $\Val^\infty(\RR^n)$. A notable example is the array of quadratic inequalities broadly generalizing the Alexandrov--Fenchel inequality discovered recently by Bernig, the author, and Wannerer \cite{KW23,BKW23} by proving the hard Lefschetz theorem and Hodge--Riemann relations for the algebra $\Val^\infty(\RR^n)$. For other results of this type see \cite{K21,KW22,Alesker22}.

Motivated by these developments, we observe that the left-hand side of \eqref{eq:conj2} is an instance of the mixed volume of rank $p$ defined by the author and Wannerer \cite{KW22}. Using that higher-rank mixed volumes appear as coefficients in the Alesker product of (the usual) mixed volumes, we finally show that the first part of Conjecture \ref{conj2} is equivalent to the following:
\begin{conjecture}
\label{conj3}
Let $K\subset\RR^n$ be a convex body with smooth boundary and positive curvature, and $n=k_0+\cdots+k_p$ a partition into non-negative integers. The Alesker product of valuations $\phi_i=(\Cdot[k_i],K[n-k_i])$, $i=0,\dots,p$, satisfies
\begin{align}
\label{eq:conj3}
\phi_0\cdots\phi_p\leq\vol_n(K)^p\vol_n.
\end{align}
\end{conjecture}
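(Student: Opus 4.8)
The plan is to collapse \eqref{eq:conj3} to a single scalar inequality and then to attack that inequality. Since $\phi_i\in\Val_{k_i}^\infty(\RR^n)$ and $k_0+\cdots+k_p=n$, the Alesker product $\phi_0\cdots\phi_p$ lies in the one-dimensional space $\Val_n^\infty(\RR^n)=\RR\vol_n$, so $\phi_0\cdots\phi_p=\lambda\,\vol_n$ for a scalar $\lambda=\lambda(K;k_0,\dots,k_p)\ge 0$, and \eqref{eq:conj3} says exactly $\lambda\le\vol_n(K)^p$. Since $\phi_0\cdots\phi_p$ is homogeneous of degree $pn$ in $K$, we may normalize $\vol_n(K)=1$, so the goal becomes $\lambda\le 1$. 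To identify $\lambda$ I would invoke \cite{KW22}: the Alesker product of mixed-volume valuations is a higher-rank mixed volume, and concretely $\lambda$ is a positive multiple of the ordinary mixed volume $V\!\left(-\Delta_pK[k],\iota_1K[n-k_1],\dots,\iota_pK[n-k_p]\right)$ in $\RR^{np}$, with $k:=k_1+\cdots+k_p$ and $k_0=n-k$; under this identification $\lambda\le 1$ is precisely the first part of Conjecture \ref{conj2}. The task is thus the general (not necessarily anti-blocking) case of \eqref{eq:conj2}.

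For this I see two routes. The first is to push the proof of Theorem \ref{thm:main} past the anti-blocking hypothesis. There one transports the problem to $(\RR^n)^p$, represents the mixed volume of the images of $\Delta_p$ and the $\iota_j$ as an integral over an auxiliary body (or as a sum of mixed determinants), and bounds it by applying the dual Bollob\'as--Thomason inequality of Liakopoulos \cite{Liakopoulos19} to a uniform cover of the $np$ coordinates adapted to $\Delta_p$ and the $\iota_j$, with the equality case handled via Boroczky--Kalantzopoulos--Xi \cite{BKX23}. What makes this work is that for an anti-blocking $K$ the section and the projection of $K$ along each coordinate subspace coincide, so Bollob\'as--Thomason and its dual are simultaneously available with the right constant. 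For a general $K$ one would have to replace the coordinate family by a subspace family tailored to the pair $(\Delta_p,\{\iota_j\})$ and hope for a Loomis--Whitney/Bollob\'as--Thomason estimate valid for it.

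The second route is purely valuation-theoretic: feed the $\phi_i$ (and their hard-Lefschetz translates) into the Hodge--Riemann relations for $\Val^\infty(\RR^n)$ \cite{KW23,BKW23} to generate Alexandrov--Fenchel-type quadratic inequalities and combine them so as to dominate $\phi_0\cdots\phi_p$. The obstruction here is both concrete and, I think, informative: the Hodge--Riemann relations applied to $\{\phi_i\}$ produce the \emph{reverse} bound $\phi_0\cdots\phi_p\ge\vol_n(K)^p\vol_n$ (for $p=1$ this is the elementary inequality $V(-K[k],K[n-k])\ge\vol_n(K)$, sharp for ellipsoids), whereas \eqref{eq:conj3} is saturated by simplices. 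So a successful argument must supply a \emph{complementary} positivity in the cone of mixed-volume valuations that encodes simplex extremality --- a ``reverse Hodge--Riemann'' phenomenon --- perhaps by transplanting, through the Alesker-product/convolution dictionary, the simplex-sensitive refinements of the Rogers--Shephard inequality behind \cite{ArtsteinETAL15,Artstein17,ArtsteinPutterman24}. This is exactly where I expect the main obstacle, shared by both routes: \eqref{eq:conj3} is an extremal property of the simplex, and neither the Loomis--Whitney circle of ideas nor the Hodge--Riemann circle of ideas presently captures it for arbitrary convex bodies --- which is why, beyond the anti-blocking case of Conjecture \ref{conj2} settled in Theorem \ref{thm:main}, the conjecture remains open.
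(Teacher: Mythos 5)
The statement you were asked to prove is an open conjecture, and the paper does not prove it either: all it establishes is Proposition \ref{pro:equiv}, namely that \eqref{eq:conj3} is equivalent to the inequality part of Conjecture \ref{conj2}, together with Theorem \ref{thm:main}, which settles Conjecture \ref{conj2} only for anti-blocking bodies. Your reduction step is correct and is essentially the paper's Proposition \ref{pro:equiv}: since $\phi_0\cdots\phi_p\in\Val_n^\infty(\RR^n)=\linspan\{\vol_n\}$, the claim is a scalar inequality, and by \cite[Proposition 5.4]{KW22} the scalar is an explicit positive multiple of $V\big(-\Delta_pK[k],\iota_1K[n-k_1],\dots,\iota_pK[n-k_p]\big)$ with $k=k_1+\cdots+k_p$; the constants do match so that $\lambda\leq\vol_n(K)^p$ is exactly \eqref{eq:conj2}. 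The one piece of bookkeeping you skip, and which the paper carries out, is verifying that the Lebesgue measure on $\coker\Delta_{p+1}$ induced by the exact sequence is pushed forward by $\bar g(x_0,\dots,x_p)=(x_1-x_0,\dots,x_p-x_0)$ to the canonical measure on $\RR^{pn}$; without this the multiplicative constant in the identification is not pinned down. Also, strictly speaking the equivalence is first proved for $K\in\calK_+^\infty(\RR^n)$ and the general case of \eqref{eq:conj2} then follows by continuity, which matters if one wants the two conjectures to be genuinely equivalent rather than \eqref{eq:conj3} being a formally weaker smooth-body statement.

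Beyond the reduction, your proposal is a survey of strategies rather than a proof, and you say so yourself; the gap is the entire general case of Conjecture \ref{conj2}, which is open, so this is not a defect you could have repaired. Two remarks on your two routes. First, the anti-blocking argument of Theorem \ref{thm:main} does not obviously ``push past'' the anti-blocking hypothesis: it rests on the disjoint decomposition of $L-\Delta_pK$ into pieces indexed by uniform covers (Lemma \ref{lem:union}) and on the dual Bollob\'as--Thomason inequality, both of which are tied to the coordinate structure; replacing the coordinate subspaces by a family adapted to a general $K$ is precisely what is missing, as you correctly suspect. Second, your observation that the Hodge--Riemann/Alexandrov--Fenchel machinery yields the reverse bound is accurate (for $p=1$, $V(-K[k],K[n-k])\geq\vol_n(K)$ follows from $V(-K[k],K[n-k])^n\geq\vol_n(K)^n$), and it is a fair diagnosis of why the valuation-theoretic route does not currently reach \eqref{eq:conj3}, whose extremizers should be simplices rather than symmetric bodies.
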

Notice that if Conjecture \ref{conj2} is true, equality is never attained in \eqref{eq:conj3}. Nonetheless, we believe  the reformulation of the inequality \eqref{eq:conj2} in terms of the Alesker product can still be instructive for several reasons, see the discussion at the end of Section \ref{s:Alesker}.

\subsection*{Acknowledgments} We are grateful to Davide Ravasini for inspiring discussions that provided us with the initial motivation to formulate the above conjectures and for many useful comments during the preparation of the manuscript. We also thank to Andreas Bernig for useful discussions and the anonymous referees for their careful reading and many helpful suggestions.

\section{Background}
\label{s:background}

\subsection{Anti-blocking convex bodies}

Let us first fix some notation. Throughout the text we assume $n\geq 2$. The standard basis of $\RR^n$ will always be denoted by $e_1,\dots,e_n$ and the canonical Lebesgue measure by $\vol_n$. For any subset $\sigma\subset\{1,\dots,n\}$ let $E_\sigma=\linspan\{e_i\mid i\in\sigma\}$ be the corresponding coordinate subspace and $C_\sigma=\{\sum_{i\in\sigma}\lambda_ie_i\mid \lambda_i>0\}$ the (relatively) open positive orthant in $E_\sigma$. Notice that we follow the convention $E_\emptyset=C_\emptyset=\{0\}$. For a convex body, i.e., a compact convex subset $K\subset \RR^n$ we set $K_\sigma=K\cap C_\sigma$. Finally, we denote $\sigma^c=\{1,\dots,n\}\setminus\sigma$.

A convex body $K\subset\RR^n$ is called locally anti-blocking if for each $\sigma\subset\{1,\dots,n\}$ one has $P_{E_\sigma}K=K\cap E_\sigma$, where $P_{E_\sigma}:\RR^n\to E_\sigma$ is the orthogonal projection. A locally anti-blocking convex body $K\subset\RR^n$ is called anti-blocking if $K\subset\RR^n_{\geq0}$. Equivalently, $K\subset\RR^n_{\geq0}$ is anti-blocking if and only if for any $y\in K$ and $x\in\RR^n_{\geq0}$ such that $x_i\leq y_i$, $i=1,\dots,n$, it holds that $x\in K$. Observe that for any anti-blocking convex body $K\subset\RR^n$ one has the (disjoint) decomposition
\begin{align}
\label{eq:Ksigma}
K=\bigcup_\sigma K_\sigma,
\end{align}
where the union is over all subsets $\sigma\subset\{1,\dots,n\}$.

\subsection{Dual Bollob\'as–Thomason inequality}
\label{ss:dualBT}
Let $p,q\in\NN$. We say that a collection of subsets $\sigma_1,\dots,\sigma_q\subset\{1,\dots,n\}$ form a $p$-uniform cover of $\{1,\dots,n\}$ if each $j\in\{1,\dots,n\}$ is contained in exactly $p$ of the the sets $\sigma_1,\dots,\sigma_q$. Given a $p$-uniform cover $\sigma_1,\dots,\sigma_q\subset\{1,\dots,n\}$, the collection $\tilde\sigma_1,\dots,\tilde\sigma_r$ of all sets of the form $\bigcap_{i=1}^q\sigma_i^{\epsilon(i)}$, where $\epsilon(i)\in\{0,1\}$ and $\sigma_i^0=\sigma_i$ and $\sigma_i^1=\sigma_i^c$, form a $1$-uniform cover of $\{1,\dots,n\}$. We call $\tilde\sigma_1,\dots,\tilde\sigma_r$ the $1$-uniform cover induced by $\sigma_1,\dots,\sigma_q$.

\begin{theorem}[Liakopoulos {\cite[Theorem 1.2]{Liakopoulos19}}]
\label{thm:dualBT}
Let $K\subset\RR^n$ be a convex body with $0\in\interior K$ and $\sigma_1,\dots,\sigma_q$ a $p$-uniform cover of $\{1,\dots,n\}$. Then
\begin{align}
\label{eq:dualBT}
\vol_n(K)^p\geq\frac{\prod_{i=1}^q|\sigma_i|!}{(n!)^p}\prod_{i=1}^q\vol_{|\sigma_i|}(K\cap E_{\sigma_i}).
\end{align} 
\end{theorem}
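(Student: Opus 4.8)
The plan is to reduce the general statement to the special case $p=1$, i.e. to the case of $1$-uniform covers of $\{1,\dots,n\}$ --- equivalently, ordered partitions $\{1,\dots,n\}=\tau_1\sqcup\cdots\sqcup\tau_r$ --- and to settle that case by an explicit construction inside $K$. For $p=1$ the coordinate subspaces $E_{\tau_1},\dots,E_{\tau_r}$ are pairwise orthogonal and together span $\RR^n$, and since $0\in\interior K$ each section $L_j:=K\cap E_{\tau_j}$ is a convex body in $E_{\tau_j}$ containing the origin in its relative interior. First I would show that the convex hull $C:=\mathrm{conv}(L_1\cup\cdots\cup L_r)$, which lies in $K$ by convexity, has volume equal to the right-hand side of \eqref{eq:dualBT} for $p=1$, namely
\[
\vol_n(C)=\frac{\prod_{j=1}^r|\tau_j|!}{n!}\prod_{j=1}^r\vol_{|\tau_j|}(L_j).
\]
The key point is the description of $C$ through the Minkowski gauges $\|\cdot\|_{L_j}$ of the sections: writing $x\in\RR^n$ in its unique decomposition $x=\sum_j x_j$ with $x_j\in E_{\tau_j}$, one has $x\in C$ if and only if $\sum_j\|x_j\|_{L_j}\le1$. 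Passing to polar-type coordinates $s_j=\|x_j\|_{L_j}$ inside each $E_{\tau_j}$ turns $\vol_n(C)$ into the Dirichlet integral $\big(\prod_j|\tau_j|\,\vol_{|\tau_j|}(L_j)\big)\int_{\{s\ge0,\ \sum_j s_j\le1\}}\prod_j s_j^{|\tau_j|-1}\,ds$, which evaluates to the claimed constant.

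For a general $p$-uniform cover $\sigma_1,\dots,\sigma_q$ I would argue by induction on $n$. Fixing a coordinate, say $n$, it lies in exactly $p$ of the sets, say $\sigma_1,\dots,\sigma_p$ after relabelling; setting $\sigma_i'=\sigma_i\setminus\{n\}$ for $i\le p$ and $\sigma_i'=\sigma_i$ for $i>p$ produces a $p$-uniform cover $\sigma_1',\dots,\sigma_q'$ of $\{1,\dots,n-1\}$. Slicing $K$ and the sections $K\cap E_{\sigma_i}$ with $n\in\sigma_i$ by the hyperplanes orthogonal to $e_n$ and applying Fubini in the last coordinate reduces the various $n$-dimensional volumes to one-parameter integrals of $(n-1)$-dimensional volumes; after bringing the relevant origin inside the slices one applies the inductive hypothesis and combines the resulting families of inequalities. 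The bookkeeping --- matching the factorial constants under the passage from $n$ to $n-1$ and, above all, controlling how the $e_n$-slices of a section $K\cap E_{\sigma_i}$ compare with the corresponding slices of $K$ --- is where a genuine argument is needed; I would handle it by combining Fubini with the one-dimensional case (the lengths of the segments $K\cap E_{\{j\}}$) and a Berwald-type convexity estimate for the slice-volume functions.

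The hard part is precisely this reduction from general $p$-uniform covers to partitions: in contrast with the primal Bollob\'as--Thomason inequality there is no tensor-power trick available here, and a uniform cover need not decompose as a sum of partitions. An alternative route I would pursue in parallel is to invoke Barthe's reverse Brascamp--Lieb inequality for the geometric data $B_i=P_{E_{\sigma_i}}\colon\RR^n\to E_{\sigma_i}$ with weights $c_i=1/p$ --- these satisfy the geometric condition $\sum_i c_iB_i^*B_i=\mathrm{Id}$ exactly because the cover is $p$-uniform --- fed not with the plain indicators of the sections but with indicators of suitably rescaled bodies chosen so that the Minkowski combination produced on the left-hand side of Barthe's inequality still lies inside $K$ while the product of masses on the right-hand side reproduces the factorial constant. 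The obstacle here is that Barthe's inequality natively outputs Minkowski sums rather than convex hulls, so making this work requires revisiting and sharpening the convex-hull computation of the first paragraph; I expect one of these two routes to go through.
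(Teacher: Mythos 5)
This theorem is not proved in the paper at all: it is quoted verbatim from Liakopoulos \cite[Theorem 1.2]{Liakopoulos19} (with the equality cases from \cite{BKX23}) and used as a black box, so there is no internal proof to compare yours against; what can be assessed is whether your sketch would constitute an independent proof, and it does not. The only part you actually carry out is the $1$-uniform (partition) case, and that part is fine: for complementary coordinate subspaces $E_{\tau_1},\dots,E_{\tau_r}$ and sections $L_j=K\cap E_{\tau_j}$ containing $0$ in their relative interiors, the gauge description of $\conv(L_1\cup\cdots\cup L_r)\subset K$ and the Dirichlet integral do give $\vol_n(K)\geq \frac{\prod_j|\tau_j|!}{n!}\prod_j\vol_{|\tau_j|}(L_j)$ with the sharp constant. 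But the theorem's content is the general $p$-uniform case, and there your text explicitly stops at two unexecuted strategies, each with an obstruction you yourself name but do not resolve.

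Concretely: in the slicing/induction route, the step ``controlling how the $e_n$-slices of a section $K\cap E_{\sigma_i}$ compare with the corresponding slices of $K$'' is precisely the heart of the matter, and Fubini plus Berwald-type concavity estimates of the kind you invoke are exactly the tools that in the literature produce \emph{non-sharp} constants for uniform-cover section inequalities; nothing in your outline explains how the sharp factor $\prod_i|\sigma_i|!/(n!)^p$ (whose equality cases, by Theorem \ref{thm:equality}, are convex hulls of sections over the induced $1$-uniform cover) would emerge from this bookkeeping. In the Barthe route, the geometric condition $\sum_i \frac{1}{p}P_{E_{\sigma_i}}^{*}P_{E_{\sigma_i}}=\mathrm{Id}$ does hold, but reverse Brascamp--Lieb applied to indicators of the sections bounds from below the volume of the Minkowski average $\frac{1}{p}\sum_i(K\cap E_{\sigma_i})$, which is in general not contained in $K$ (already for $p=1$ it is a Minkowski sum, much larger than $K$), so a plain application gives no lower bound on $\vol_n(K)$; the ``suitably rescaled bodies'' that are supposed to fix this are never specified, and it is not clear they exist while preserving the sharp constant. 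So the proposal proves only the partition case and leaves the actual statement unproved; if the intention was merely to use the result, the honest course is to cite \cite{Liakopoulos19}, as the paper does.
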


\begin{theorem}[Boroczky--Kalantzopoulos--Xi {\cite[Theorem 12]{BKX23}}]
\label{thm:equality}
Let $K\subset\RR^n$ be a convex body with $0\in\interior K$ and $\sigma_1,\dots,\sigma_q$ a $p$-uniform cover of $\{1,\dots,n\}$. Then equality holds in \eqref{eq:dualBT} if and only if $K=\conv\{K\cap E_{\tilde\sigma_i}\mid i=1,\dots,r\}$, where $\tilde\sigma_1,\dots,\tilde\sigma_r$ is the $1$-uniform cover  induced by $\sigma_1,\dots,\sigma_q$.
\end{theorem}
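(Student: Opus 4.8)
The plan is to prove the two implications separately; the ``if'' direction is a direct computation, while the converse requires revisiting the proof of \eqref{eq:dualBT} and locating where slack can occur.

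For the ``if'' direction I would set $L=\conv\{K\cap E_{\tilde\sigma_j}\mid j=1,\dots,r\}$. Since $0\in\interior K$, each section $K\cap E_{\tilde\sigma_j}$ contains a neighbourhood of the origin in $E_{\tilde\sigma_j}$, so $L$ is an $n$-dimensional convex body contained in $K$, and iterating the elementary join volume formula---for disjoint $\sigma,\tau$ and bodies $A\subseteq E_\sigma$, $B\subseteq E_\tau$ containing the origin one has $\vol_{|\sigma|+|\tau|}(\conv(A\cup B))=\tfrac{|\sigma|!\,|\tau|!}{(|\sigma|+|\tau|)!}\vol_{|\sigma|}(A)\vol_{|\tau|}(B)$---gives $\vol_n(L)=\tfrac{\prod_j|\tilde\sigma_j|!}{n!}\prod_j\vol_{|\tilde\sigma_j|}(K\cap E_{\tilde\sigma_j})$. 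Assuming $K=L$, and using that each $\sigma_i$ is a union of atoms together with $0\in\interior K$, one checks that $K\cap E_{\sigma_i}=\conv\{K\cap E_{\tilde\sigma_j}\mid\tilde\sigma_j\subseteq\sigma_i\}$, whose volume is again given by the join formula. Substituting into the right-hand side of \eqref{eq:dualBT}, the factors $|\sigma_i|!$ cancel, and since every atom $\tilde\sigma_j$ lies in exactly $p$ of the sets $\sigma_i$ (all elements of an atom lie in the same $\sigma_i$'s, each in precisely $p$ of them), every factor $|\tilde\sigma_j|!$ and $\vol_{|\tilde\sigma_j|}(K\cap E_{\tilde\sigma_j})$ appears to the power $p$ on both sides; the right-hand side thus equals $\vol_n(L)^p=\vol_n(K)^p$, so equality holds.

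For the converse I would first reduce to the case that no $\sigma_i$ equals $\{1,\dots,n\}$: deleting such a set yields a $(p-1)$-uniform cover, divides both sides of \eqref{eq:dualBT} by $\vol_n(K)$, and leaves the induced $1$-uniform cover unchanged, so the equality case is unaffected. The essential case is $p=1$, where $\{\tilde\sigma_j\}=\{\sigma_i\}$ and \eqref{eq:dualBT} reads exactly $\vol_n(K)\ge\vol_n(L)$, coming solely from $L\subseteq K$; as $L$ is full-dimensional, equal volumes of the nested convex bodies $L\subseteq K$ force $K=L$. For general $p$, the proof of \eqref{eq:dualBT} can be reduced to this $1$-uniform case via Barthe's reverse Brascamp--Lieb inequality applied to the indicators $\mathbf 1_{K\cap E_{\sigma_i}}$ on $E_{\sigma_i}$, the Brascamp--Lieb datum being the coordinate inclusions $E_{\sigma_i}\hookrightarrow\RR^n$ with constant weights $1/p$ (admissible precisely because the cover is $p$-uniform); this is, in essence, Liakopoulos's argument. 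Tracking equality then amounts to (i) determining the equality case of this reverse Brascamp--Lieb step and (ii) combining it with the $1$-uniform case above. Step (i) is where I expect the main obstacle: the standard equality analysis of reverse Brascamp--Lieb is phrased for Gaussian inputs, whereas here the inputs are indicators of convex bodies that moreover interact through all the overlaps of the cover, and one must show that exact equality forces them into the product structure dictated by the coarsest common refinement of the $\sigma_i$---i.e.\ by the atoms $\tilde\sigma_j$---which is what globalizes through (ii) to $K=\conv\{K\cap E_{\tilde\sigma_j}\}$. A possible alternative to Barthe's inequality is an induction slicing $K$ along coordinate hyperplanes and combining \eqref{eq:dualBT} on the slices with Brunn's concavity theorem, with the equality case of Brunn's theorem supplying the rigidity; in either approach the delicate point is the same, namely propagating local rigidity coherently across a non-$1$-uniform cover so that it crystallizes into the clean global statement of the theorem.
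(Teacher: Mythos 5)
This statement is not proved in the paper at all: it is quoted verbatim from Boroczky--Kalantzopoulos--Xi \cite[Theorem 12]{BKX23}, so there is no internal argument to compare yours against; what matters is whether your attempt actually establishes the cited result. Your ``if'' direction is correct and complete: since $0\in\interior K$, the free-sum volume formula $\vol_{|\sigma|+|\tau|}(\conv(A\cup B))=\tfrac{|\sigma|!\,|\tau|!}{(|\sigma|+|\tau|)!}\vol_{|\sigma|}(A)\vol_{|\tau|}(B)$ applies to the sections over the atoms $\tilde\sigma_j$, the identity $K\cap E_{\sigma_i}=\conv\{K\cap E_{\tilde\sigma_j}\mid \tilde\sigma_j\subseteq\sigma_i\}$ does hold (reassigning the weight of any atom outside $\sigma_i$ to the origin), and the bookkeeping with each atom lying in exactly $p$ of the $\sigma_i$ gives equality in \eqref{eq:dualBT}. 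The preliminary reduction discarding any $\sigma_i=\{1,\dots,n\}$ and the $p=1$ case via $L\subseteq K$ with $\vol_n(L)=\vol_n(K)$ are also fine.

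The converse, however, contains a genuine gap, and it sits exactly at the decisive point. Reducing \eqref{eq:dualBT} to Barthe's reverse Brascamp--Lieb inequality with the coordinate inclusions $E_{\sigma_i}\hookrightarrow\RR^n$ and weights $1/p$ is indeed Liakopoulos's route to the inequality, but ``determining the equality case of this reverse Brascamp--Lieb step'' for indicator (i.e.\ non-Gaussian) inputs is not a verification one can wave at: the characterization of equality in Barthe's inequality for general integrable inputs was open for a long time, and settling it---and then showing that for this particular datum equality forces the product structure along the atoms $\tilde\sigma_j$---is precisely the substantive content of \cite{BKX23}. Your own text concedes this (``Step (i) is where I expect the main obstacle'') and does not supply the rigidity argument; the alternative sketch via slicing and the equality case of Brunn's theorem is likewise left at the level of a plan, with the same unaddressed difficulty of propagating rigidity across a cover that is not $1$-uniform. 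As it stands, the proposal proves the easy implication and reduces the hard one to an unproved statement that is essentially equivalent to the theorem being cited, so it does not constitute a proof of Theorem \ref{thm:equality}.
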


\subsection{Mixed volume}

Consider an $n$-dimensional vector space $W$ with a fixed  Lebesgue measure $\vol_W$. The mixed volume of convex bodies $K_1,\dots,K_n\subset W$ is defined as
\begin{align*}
V(K_1,\dots,K_n)=\frac{1}{n!}\frac{\partial^n}{\partial \lambda_1\cdots\partial\lambda_n}\vol_W(\lambda_1 K_1+\cdots+\lambda_n K_n).
\end{align*}
Below the mixed volume will also be denoted by $V_W$ to emphasize the underlying space. By a classical result of Minkowski, $\vol_W(\lambda_1 K_1+\cdots+\lambda_n K_n)$ is in fact an $n$-homogeneous polynomial in $\lambda_i>0$. The mixed volume satisfies for any convex bodies $L,K_1,\dots,K_n\subset W$, permutation $\rho$ of $\{1,\dots,n\}$, $\lambda>0$, and $T\in\GL(W)$ the following properties:
\begin{align*}
V(L,\dots,L)&=\vol_W(L),\\
V\big(K_{\rho(1)},\dots,K_{\rho(n)}\big)&=V(K_1,\dots,K_n),\\
V(K_1+\lambda L,K_2,\dots,K_n)&=V(K_1,\dots,K_n)+\lambda V(L,K_2,\dots,K_n),\\
V(TK_1,\dots,TK_n)&=|\det T|\,V(K_1,\dots,K_n),\\
V(K_1,\dots,K_n)&\geq0.
\end{align*}
Moreover, $V(K_1,\dots,K_n)>0$ if and only if there are segments $S_i\subset K_i$, $i=1,\dots,n$, with linearly independent directions. For reference and more on mixed volumes, see \cite[Chapter 5]{Schneider2014}.

Consider an exact sequence $0\longrightarrow W_1\stackrel{f}{\longrightarrow} W_2\stackrel{g}{\longrightarrow} W_3\longrightarrow 0$ of finite-dimensional vector spaces and fix Lebesgue measures $\vol_{W_1}$ and $\vol_{W_2}$ on $W_1$ and $W_2$, respectively. There is a canonical Lebesgue measure $\vol_{W_3}$ on $W_3$ given as follows: Let $s:W_3\to W_2$ be an arbitrary linear map such that $W_2=f(W_1)\oplus s(W_3)$. Then $\vol_{W_3}$ is the unique Lebesgue measure on $W_3$ such that $\vol_{W_2}=f_*\vol_{W_1}\otimes s_*\vol_{W_3}$.

The following well-known fact about mixed volumes will be crucial for the proof of our main theorem, cf. \cite[Lemma 5.1]{KW22}, \cite[Lemma 2.6.1]{Alesker11}, or \cite[Theorem 5.3.1]{Schneider2014}.

\begin{lemma}
\label{lem:exact}
Let $0\longrightarrow W_1\stackrel{f}{\longrightarrow} W_2\stackrel{g}{\longrightarrow} W_3\longrightarrow 0$ be an exact sequence of vector spaces with $\dim W_1=k$ and $\dim W_2=n$. Let $\vol_{W_1}$ and $\vol_{W_2}$ be Lebesgue measures on $W_1$ and $W_2$, and $\vol_{W_3}$ the induced Lebesgue measure on $W_3$. The corresponding mixed volumes satisfy
\begin{align*}
{n\choose k}V_{W_2}\big(K_1,\dots,K_{n-k},f(L_1),\dots,f(L_k)\big)=V_{W_3}\big(g(K_1),\dots,g(K_{n-k})\big)\,V_{W_1}(L_1,\dots,L_k)
\end{align*}
for any convex bodies $K_1,\dots,K_{n-k}\subset W_2$ and $L_1,\dots, L_k\subset W_1$.
\end{lemma}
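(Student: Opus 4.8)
The plan is to derive the identity from Fubini's theorem applied to the very splitting that defines the induced measure $\vol_{W_3}$, recovering all three mixed volumes as coefficients of a single polynomial. First I would fix a linear section $s\colon W_3\to W_2$ with $g\circ s=\mathrm{id}_{W_3}$, so that $W_2=f(W_1)\oplus s(W_3)$ and the isomorphism $(z,x)\mapsto s(z)+f(x)$ from $W_3\times W_1$ to $W_2$ carries the product measure $\vol_{W_3}\otimes\vol_{W_1}$ to $\vol_{W_2}$; this is precisely the definition of $\vol_{W_3}$.

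The central step is the slicing formula
\begin{align*}
\vol_{W_2}\bigl(C+f(B')\bigr)=\int_{g(C)}\vol_{W_1}(D_z+B')\,d\vol_{W_3}(z),
\end{align*}
valid for all convex bodies $C\subset W_2$ and $B'\subset W_1$, where for $z\in g(C)$ the convex body $D_z\subset W_1$ is defined by $f(D_z)=(C-s(z))\cap f(W_1)$. To prove it I would examine the fibre of $C+f(B')$ over $z$ under the projection $g$: since $f(B')\subset\ker g$, this fibre equals $\bigl(C\cap g^{-1}(z)\bigr)+f(B')$, and after subtracting $s(z)$ and applying $f^{-1}$ it becomes $D_z+B'$, nonempty exactly when $z\in g(C)$; integrating against $\vol_{W_3}\otimes\vol_{W_1}$ then yields the formula. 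The only points needing a little care are the measurability of $z\mapsto\vol_{W_1}(D_z+B')$ and the harmlessness of boundary and lower-dimensional fibres — or one may simply invoke the Cavalieri principle for $g$.

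Finally I would substitute $C=\sum_i\lambda_iK_i$ and $B'=\sum_j\mu_jL_j$ with positive parameters $\lambda_i,\mu_j$, so that $g(C)=\sum_i\lambda_ig(K_i)$. By Minkowski's theorem both sides of the slicing formula are polynomials in $(\lambda,\mu)$, and since the integrand depends polynomially on $\mu$ over a domain independent of $\mu$, differentiation under the integral is legitimate. The observation that makes everything collapse is that
\begin{align*}
\frac{\partial^k}{\partial\mu_1\cdots\partial\mu_k}\,\vol_{W_1}\Bigl(D_z+\textstyle\sum_j\mu_jL_j\Bigr)=k!\,V_{W_1}(L_1,\dots,L_k)
\end{align*}
is a constant that does \emph{not} depend on the fibre $D_z$, because the only monomial linear in each of $\mu_1,\dots,\mu_k$ already exhausts all $k$ dimensions of $W_1$. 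Hence $\partial^k/\partial\mu_1\cdots\partial\mu_k$ at $\mu=0$ turns the right-hand side of the slicing formula into $k!\,V_{W_1}(L_1,\dots,L_k)\,\vol_{W_3}\bigl(\sum_i\lambda_ig(K_i)\bigr)$, and then applying $\partial^{n-k}/\partial\lambda_1\cdots\partial\lambda_{n-k}$ at $\lambda=0$ — using the definition of the mixed volume in $W_2$ (dimension $n$) on the left-hand side and in $W_3$ (dimension $n-k$) on the right-hand side — produces
\begin{align*}
n!\,V_{W_2}\bigl(K_1,\dots,K_{n-k},f(L_1),\dots,f(L_k)\bigr)=k!\,(n-k)!\,V_{W_1}(L_1,\dots,L_k)\,V_{W_3}\bigl(g(K_1),\dots,g(K_{n-k})\bigr),
\end{align*}
which is the assertion after division by $n!$. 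The degenerate cases $k\in\{0,n\}$, where $W_1$ or $W_3$ is trivial, are immediate, since then $g$, respectively $f$, is a measure-preserving isomorphism. I do not anticipate a serious obstacle; the one thing to watch is that the normalisation of $\vol_{W_3}$ — and hence the binomial factor $\binom{n}{k}$ — comes out correctly, which is precisely why the whole argument is routed through the defining splitting.
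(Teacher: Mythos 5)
Your argument is correct. Note, however, that the paper does not prove Lemma \ref{lem:exact} at all: it is quoted as a well-known fact with references to \cite[Lemma 5.1]{KW22}, \cite[Lemma 2.6.1]{Alesker11}, and \cite[Theorem 5.3.1]{Schneider2014}, so there is no in-paper proof to compare against. Your Fubini/slicing argument is essentially the standard proof of the Euclidean version (projection onto $E^\perp$ times mixed volume inside $E$, as in Schneider's Theorem 5.3.1), rerouted through the splitting $W_2=f(W_1)\oplus s(W_3)$ that defines the induced measure $\vol_{W_3}$; this is exactly the right way to adapt it to the exact-sequence formulation, since it makes the normalization (and hence the factor $\binom{n}{k}$) come out automatically rather than by choosing Euclidean structures. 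The key observations all check out: the fibre of $C+f(B')$ over $z$ is $(C\cap g^{-1}(z))+f(B')$ because $f(B')\subset\ker g$; the mixed partial $\partial^k/\partial\mu_1\cdots\partial\mu_k$ of $\vol_{W_1}(D_z+\sum_j\mu_jL_j)$ is the constant $k!\,V_{W_1}(L_1,\dots,L_k)$ independent of the slice $D_z$ since $\dim W_1=k$; and the remaining $\lambda$-derivatives produce $(n-k)!\,V_{W_3}(g(K_1),\dots,g(K_{n-k}))$ on the right and $n!\,V_{W_2}(\cdot)$ on the left. The technical points you flag (measurability of the fibre-volume function and differentiation under the integral) are indeed harmless: measurability follows from Fubini--Tonelli applied to the indicator of the compact set $C+f(B')$, and the coefficients of the $\mu$-polynomial are uniformly bounded since all slices lie in a fixed compact set, so the interchange is justified; alternatively one can simply compare coefficients of $\lambda_1\cdots\lambda_{n-k}\mu_1\cdots\mu_k$ in the polynomial identity.
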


\subsection{Alesker product of smooth valuations}

To conclude the background section, let us collect several facts from the algebraic theory of valuations on convex bodies, following \cite{AleskerFu2014}.

A valuation is a (complex-valued) function $\phi$ on the space of convex bodies in $\RR^n$ satisfying
$$
\phi(K\cup L)=\phi(K)+\phi(L)-\phi(K\cap L)
$$
for any convex bodies $K,L\subset\RR^n$ such that $K\cup L$ is convex. The space $\Val(\RR^n)$ of all continuous (with respect to the Hausdorff metric), translation invariant valuations is a Banach space. It contains a natural dense subspace $\Val^\infty(\RR^n)$ of smooth, translation invariant valuations. By a recent result of Knoerr \cite{Knoerr23}, 
$$
\Val^\infty(\RR^n)=\linspan\{V(\Cdot[k],K_1,\dots,K_{n-k})\mid 0\leq k\leq n\text{ and }K_1,\dots,K_{n-k}\in\calK_+^\infty(\RR^n) \},
$$
where $\calK_+^\infty(\RR^n)$ is the set of convex bodies in $\RR^n$ with smooth boundary and positive curvature. Observe that one has the grading $\Val^\infty(\RR^n)=\bigoplus_{k=0}^n\Val_k^\infty(\RR^n)$, where $\Val_k^\infty(\RR^n)$ is the subspace of $k$-homogeneous valuations, and that $\Val_0^\infty(\RR^n)=\linspan\{1\}$ and $\Val_n^\infty(\RR^n)=\linspan\{\vol_n\}$. 

The Alesker product is a bilinear product on $\Val^\infty(\RR^n)$ uniquely determined as follows: For $K_1,K_2\in\calK^\infty_+(\RR^n)$, consider the valuations $\phi_i(K)=\vol_n(K+K_i)$, $i=1,2$, and set
$$
(\phi_1\cdot\phi_2)(K)=\vol_{2n}\big(\Delta_2(K)+K_1\times K_2\big).
$$
Recall that $\Delta_p:\RR^n\to(\RR^n)^p$ is the diagonal embedding given by $\Delta_p(x)=(x,\dots,x)$. Equipped with the Alesker product, $\Val^\infty(\RR^n)$ is an associative, commutative, graded algebra.

\section{Genealogy of the conjectures}
\label{s:genealogy}

In this section we will show that Conjecture \ref{conj2} implies Conjecture \ref{conj1} and that the latter in turn implies Schneider's inequality for the volume of higher-order difference body.

\begin{proposition}
\label{pro:DpK}
For each convex body $K\subset\RR^n$ one has $D_pK=\Delta_pK-K^p$.
\end{proposition}

\begin{proof}
By definition, $(x_1,\dots,x_p)\in D_pK$ if and only if there exist $k_0,\dots,k_p\in K$ such that $k_0=k_1+x_1=\cdots=k_p+x_p$ or, equivalently,
$$
(x_1,\dots,x_p)=(k_0-k_1,\dots,k_0-k_p)=(k_0,\dots,k_0)-(k_1,\dots,k_p) \in \Delta_pK-K^p.
$$
\end{proof}

\begin{proposition}
\label{pro:stronger1}
Conjecture \ref{conj1}, if true, implies Schneider's inequality \eqref{eq:Schneider}.
\end{proposition}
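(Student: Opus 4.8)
The plan is to expand the left-hand side of \eqref{eq:Schneider} by multilinearity of the mixed volume over the $np$-dimensional space $(\RR^n)^p$ and then bound the resulting terms individually using \eqref{eq:conj1}.

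First I would use the Minkowski-sum presentation $D_pK=\Delta_pK-K^p$ together with the reflection invariance of the Lebesgue measure to rewrite
\begin{align*}
\vol_{pn}(D_pK)=\vol_{pn}\big(K^p+(-\Delta_pK)\big).
\end{align*}
Since the restriction of $\vol_{pn}$ to convex bodies in $(\RR^n)^p$ polarizes to the mixed volume $V$ in that space, Minkowski's polynomiality theorem gives
\begin{align*}
\vol_{pn}(D_pK)=\sum_{k=0}^{np}\binom{np}{k}V\big(-\Delta_pK[k],K^p[np-k]\big).
\end{align*}

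Next I would observe that every term with $k>n$ vanishes: the body $-\Delta_pK$ is contained in the $n$-dimensional diagonal subspace $\Delta_p(\RR^n)\subset(\RR^n)^p$, so among $k>n$ copies of $-\Delta_pK$ one cannot choose segments with linearly independent directions, and the positivity criterion for mixed volumes forces the corresponding summand to be $0$. For the surviving indices $0\leq k\leq n$, Conjecture \ref{conj1} yields $V(-\Delta_pK[k],K^p[np-k])\leq\binom{n}{k}\vol_n(K)^p$, whence
\begin{align*}
\vol_{pn}(D_pK)\leq\left(\sum_{k=0}^{n}\binom{np}{k}\binom{n}{k}\right)\vol_n(K)^p.
\end{align*}
Finally, rewriting $\binom{n}{k}=\binom{n}{n-k}$ and applying the Vandermonde convolution identity gives $\sum_{k=0}^{n}\binom{np}{k}\binom{n}{k}=\binom{np+n}{n}$, which is precisely \eqref{eq:Schneider}. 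The argument is entirely elementary once \eqref{eq:conj1} is granted; the only point that genuinely needs a word of justification is the vanishing of the high-degree terms, which can alternatively be read off directly from the product structure of $K^p$ and the fact that $\Delta_pK$ spans only an $n$-dimensional subspace, via Lemma \ref{lem:exact}. (The equality characterization in Schneider's inequality is not needed for the implication, though Conjecture \ref{conj1} predicts equality throughout \eqref{eq:conj1} exactly for simplices, in agreement with it.)
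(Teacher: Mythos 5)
Your proof is correct and follows essentially the same route as the paper: expand $\vol_{pn}(D_pK)=\vol_{pn}(\Delta_pK-K^p)$ by multilinearity, bound each term by \eqref{eq:conj1}, and sum using the identity $\sum_{k=0}^n\binom{np}{k}\binom{n}{k}=\binom{np+n}{n}$ (which the paper derives by comparing coefficients, i.e.\ the same Vandermonde convolution you cite). The only difference is that you spell out why the terms with $k>n$ vanish, a point the paper leaves implicit by summing only up to $n$; your justification via the positivity criterion for mixed volumes is valid.
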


\begin{proof}
Let $n,p\in\NN$. Comparing the coefficients of $x^ny^{pn}$ in the expansion of $(x+y)^{pn+n}$ on the one hand and in the product of expansions $(x+y)^{pn}(x+y)^n$ on the other, one obtains the Vandermonde identity
\begin{align*}
\sum_{k=0}^n{pn\choose k}{n\choose k}= {pn+n\choose n}.
\end{align*}
Consequently, using Proposition \ref{pro:DpK}, if \eqref{eq:conj1} holds then
\begin{align*}
\vol_{pn}(D_pK)&=\vol_{pn}(\Delta_pK-K^p)\\
&=\sum_{k=0}^n{pn\choose k}V(-\Delta_pK[k],K^p[pn-k])\\
&\leq\sum_{k=0}^n{pn\choose k}{n\choose k}\vol_n(K)^p\\
&={pn+n\choose n}\vol_n(K)^p.
\end{align*}
\end{proof}

\begin{proposition}
\label{pro:stronger2}
Conjecture \ref{conj2} implies Conjecture \ref{conj1}.
\end{proposition}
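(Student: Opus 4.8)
The plan is to deduce \eqref{eq:conj1} from \eqref{eq:conj2} by expanding the mixed volume via the factorization $K^p=\sum_{j=1}^p\iota_jK$, in the same spirit as the proof of Proposition \ref{pro:stronger1}. First I would expand the $np-k$ copies of $\sum_{j=1}^p\iota_jK$ by multilinearity and collect terms according to the multiplicities $m_j$ with which each $\iota_jK$ occurs, which gives
\begin{align*}
V(-\Delta_pK[k],K^p[np-k])=\sum_{\substack{m_1+\cdots+m_p=np-k\\ m_j\geq0}}\binom{np-k}{m_1,\dots,m_p}\,V\big(-\Delta_pK[k],\iota_1K[m_1],\dots,\iota_pK[m_p]\big).
\end{align*}
Any summand with $m_j>n$ for some $j$ vanishes: the $m_j$ segments contained in the corresponding copies of $\iota_jK$ all lie in the $n$-dimensional subspace $\iota_j(\RR^n)$, hence cannot be part of a system of $np$ segments with linearly independent directions. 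Substituting $k_j=n-m_j$, so that the surviving range becomes $0\leq k_j\leq n$ with $k_1+\cdots+k_p=k$, the sum takes the form
\begin{align*}
V(-\Delta_pK[k],K^p[np-k])=\sum_{\substack{k_1+\cdots+k_p=k\\ 0\leq k_j\leq n}}\binom{np-k}{n-k_1,\dots,n-k_p}\,V\big(-\Delta_pK[k],\iota_1K[n-k_1],\dots,\iota_pK[n-k_p]\big).
\end{align*}

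Assuming Conjecture \ref{conj2}, each summand on the right is at most $\binom nk\binom{k}{k_1,\dots,k_p}\frac{(n!)^p}{(np)!}\vol_n(K)^p$, so it then remains to verify the identity
\begin{align*}
\sum_{\substack{k_1+\cdots+k_p=k\\ 0\leq k_j\leq n}}\binom{np-k}{n-k_1,\dots,n-k_p}\binom{k}{k_1,\dots,k_p}=\frac{(np)!}{(n!)^p},
\end{align*}
after which the factors $\frac{(n!)^p}{(np)!}$ and $\frac{(np)!}{(n!)^p}$ cancel and one is left exactly with \eqref{eq:conj1}. Pulling out $\frac{(np-k)!\,k!}{(n!)^p}$ from the left-hand side, this identity reduces to the Vandermonde convolution $\sum_{k_1+\cdots+k_p=k}\prod_{j=1}^p\binom{n}{k_j}=\binom{np}{k}$.

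For the equality characterization I would use that every multinomial coefficient occurring above is strictly positive, so equality in \eqref{eq:conj1} is equivalent to equality in each summand. When $0<k<n$ every summand has all parts $k_j\leq k<n$, and applying the equality case of Conjecture \ref{conj2} to one such summand (respectively, to all of them) shows that $K$ is a simplex (respectively, is implied by $K$ being a simplex); the boundary cases $k\in\{0,n\}$ are routine to treat directly, noting that for $p\geq2$ and $k=n$ there are still summands with all $k_j<n$ because $n\leq p(n-1)$, while for $p=1$ the assertion reduces to Godbersen's conjecture. The argument carries no genuine difficulty, being pure bookkeeping on top of Conjecture \ref{conj2}; the two points that require a moment's care are the vanishing of the out-of-range summands with $m_j>n$, which is what makes the substitution $k_j=n-m_j$ legitimate, and the Vandermonde identity used to resum.
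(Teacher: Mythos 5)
Your argument is correct and is essentially the paper's proof: expand $K^p=\iota_1K+\cdots+\iota_pK$ by multilinearity, discard the summands containing more than $n$ copies of a single $\iota_jK$, apply \eqref{eq:conj2} termwise, and resum using $\sum_{k_1+\cdots+k_p=k}{n\choose k_1}\cdots{n\choose k_p}={pn\choose k}$. You additionally make the vanishing of the out-of-range terms explicit and sketch the transfer of the equality characterization, which the paper's proof leaves implicit; for the boundary case $k=n$, $p\geq 2$ your ``routine'' step is indeed covered by Remark \ref{rem:reduction}(c), since summands with some $k_j=n$ always attain the bound.
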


\begin{proof}
Similarly as before, comparing the coefficients of $x^ky^{pn-k}$ in the expansion of $(x+y)^{pn}$ and in the product of expansions $((x+y)^n)^{p}$, one gets the Vandermonde identity
\begin{align*}
\sum{n\choose k_1}\cdots{n\choose k_p}={pn\choose k},
\end{align*}
where the sum runs over all $k_1,\dots,k_p\in\NN_0$ such that $k_1+\cdots+k_p=k$. Consequently, assuming \eqref{eq:conj2} holds, the left-hand side of \eqref{eq:conj1} can be estimated as follows:
\begin{align*}
&V(-\Delta_pK[k],K^p[pn-k])\\
&\quad=V\big(-\Delta_pK[k],(\iota_1K+\cdots+\iota_pK)[pn-k]\big)\\
&\quad=\sum_{j_1,\dots,j_p}{pn-k\choose j_1,\dots,j_p}V(-\Delta_pK[k],\iota_1K[j_1],\dots,\iota_pK[j_p])\\
&\quad=\sum_{k_1,\dots,k_p}\frac{(pn-k)!}{(n-k_1)!\cdots(n-k_p)!}V(-\Delta_pK[k],\iota_1K[n-k_1],\dots,\iota_pK[n-k_p])\\
&\quad\leq{n\choose k}{pn\choose k}^{-1}\vol_n(K)^p\sum_{k_1,\dots,k_p}{n\choose k_1}\cdots{n\choose k_p}\\
&\quad={n\choose k}\vol_n(K)^p.
\end{align*}
\end{proof}

\begin{remark}
\label{rem:symmetry}
 Let $m_0+\cdots+m_p=pn$. Observe that the expression
$$
V(-\Delta_pK[m_0],\iota_1K[m_1],\dots,\iota_pK[m_p])
$$
is symmetric in $m_0,\dots,m_p$. Indeed, the symmetry in $m_0$ and $m_1$ follows by applying the linear map $T:(\RR^n)^p\to (\RR^n)^p$ given by
$$
T(X_1,\dots,X_p)=(-X_1,X_2-X_1,X_3-X_1,\dots,X_p-X_1)
$$
which satisfies $|\det T|=1$, $T\circ-\Delta_p=\iota_1$, $T\circ\iota_1=-\Delta_1$, and $T\circ\iota_i=\iota_i$ for $i>1$. Similarly, the map $S:(\RR^n)^p\to (\RR^n)^p$ given by
$$
S(X_1,\dots,X_p)=(X_1,\dots,X_{i-1},X_j,X_{i+1},\dots,X_{j-1},X_i,X_{j+1},\dots,X_p).
$$
yields the symmetry in $m_i$ and $m_j$ for $0<i<j$.
\end{remark}

\begin{remark}
\label{rem:reduction}
Some cases of Conjecture \ref{conj2} are either trivial or can be reduced to other cases.
\begin{enuma}
\item If $\interior K=\emptyset$, then both sides of \eqref{eq:conj2} are clearly equal to zero.

\item If $k=0$, then necessarily $k_i=0$ for all $i$ and an easy application of Lemma \ref{lem:exact} yields
\begin{align*}
V(\iota_1K[n],\dots,\iota_pK[n])=\frac{(n!)^p}{(pn)!}\vol_n(K)^p.
\end{align*}
Indeed, we consider the exact sequence $0\longrightarrow \RR^n\stackrel{\iota_i}{\longrightarrow} \RR^{pn}\stackrel{\pi_i}{\longrightarrow} (\iota_i\RR^n)^\perp\longrightarrow 0$, where $\pi_i$ is the orthogonal projection, and observe that for $j\neq i$, $\pi_i\circ\iota_j$ is the inclusion into the $j$-th factor of $(\iota_i\RR^n)^\perp=\RR^{(p-1)n}$. The claim then follows by induction on $p$. By Remark \ref{rem:symmetry}, one also has  equality in \eqref{eq:conj2} if $k_i=n$ for some $i$ (in which case $k=n$ and $k_j=0$ for $j\neq i$).
\item If $k_i=0$, then one does not necessarily have equality in \eqref{eq:conj2} but the same argument as in item (b) allows us to inductively reduce these cases of Conjecture \ref{conj2} to those satisfying $k_i>0$ for all $i$. To see this, we observe that one also has $\pi_i\circ\Delta_p=\Delta_{p-1}$. By Remark \ref{rem:symmetry}, the same applies if $k=n$.

\end{enuma}
\end{remark}

\section{Proof for anti-blocking bodies}

In this section we will prove our main result, namely, that Conjecture \ref{conj2} is true for anti-blocking bodies.

\begin{lemma}
\label{lem:union}
Let $K\subset\RR^n$ and $L\subset\RR^{pn}$ be anti-blocking convex bodies. Then
\begin{align}
\label{eq:union}
L-\Delta_pK=\bigcup_{\tau,\sigma}(L_\tau-\Delta_pK_\sigma),
\end{align}
where the union is over all subsets $\tau\subset\{1,\dots,pn\}$ and $\sigma\subset\{1,\dots,n\}$ such that
\begin{align}
\label{eq:p}
|\tau\cap\{i,i+n,\dots,i+(p-1)n\}|+|\sigma\cap\{i\}|\leq p,\quad i=1,\dots, n.
\end{align}
Moreover, the union on the right-hand side of \eqref{eq:union} is disjoint.
\end{lemma}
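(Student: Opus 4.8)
The plan is to combine the disjoint decomposition \eqref{eq:Ksigma}, applied to both $L$ and $\Delta_pK$, with a coordinate-wise \emph{reduction} that transforms an arbitrary representation of a point of $L-\Delta_pK$ into one compatible with \eqref{eq:p}, and then to read $\tau$ and $\sigma$ off the signs of the coordinates for the disjointness claim. To fix notation, identify $\{1,\dots,pn\}$ with $\{1,\dots,p\}\times\{1,\dots,n\}$ via $(m,j)\mapsto(m-1)n+j$, so that $i+\ZZ n=\{(1,i),\dots,(p,i)\}$ and the $(m,j)$-coordinate of $\Delta_px$ equals $x_j$ for any $x\in\RR^n$. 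Since $L=\bigcup_\tau L_\tau$ and $\Delta_pK=\bigcup_\sigma\Delta_pK_\sigma$ (the latter because $\Delta_p$ is injective), one immediately has $L-\Delta_pK=\bigcup_{\tau,\sigma}(L_\tau-\Delta_pK_\sigma)$ over \emph{all} pairs $(\tau,\sigma)$; in particular the inclusion $\supseteq$ in \eqref{eq:union} is trivial. It then remains to show that (i) every point of $L-\Delta_pK$ lies in some term of the union restricted to pairs satisfying \eqref{eq:p}, and (ii) the sets in this restricted union are pairwise disjoint.

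For (i), fix $z=w-\Delta_px$ with $w\in L$, $x\in K$, and call a position $j\in\{1,\dots,n\}$ \emph{bad} if $x_j>0$ and $w_{(m,j)}>0$ for every $m$. For each bad $j$ set $t_j=\min\bigl(x_j,\min_m w_{(m,j)}\bigr)>0$ and replace $x$ by $x-\sum_{j\text{ bad}}t_je_j$ and $w$ by $w-\sum_{j\text{ bad}}t_j\Delta_p(e_j)$. This does not change $w-\Delta_px=z$; it keeps $x\in K$ and $w\in L$ because both bodies are anti-blocking and the modification only lowers coordinates, keeping them non-negative; and for every formerly bad $j$ either the new $x_j$ or some new $w_{(m,j)}$ equals $0$, while positions that were not bad are left untouched. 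Hence for the new $w,x$ no position is bad, which is precisely condition \eqref{eq:p} for $\tau=\{(m,j):w_{(m,j)}>0\}$ and $\sigma=\{j:x_j>0\}$; since $w\in L\cap C_\tau=L_\tau$ and $x\in K\cap C_\sigma=K_\sigma$, we get $z\in L_\tau-\Delta_pK_\sigma$. (Alternatively, one may pick among all representations $z=w-\Delta_px$, $w\in L$, $x\in K$, one minimizing $\sum_{m,j}w_{(m,j)}$---which exists by compactness---and check it admits no bad position.)

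For (ii), let $z\in L_\tau-\Delta_pK_\sigma$ with $(\tau,\sigma)$ satisfying \eqref{eq:p}; we claim $(\tau,\sigma)$ is determined by $z$. Write $z=w-\Delta_px$ with $w\in L_\tau$, $x\in K_\sigma$, and put $\mu_j=\min_m z_{(m,j)}$. If $j\notin\sigma$ then $x_j=0$, so $z_{(m,j)}=w_{(m,j)}\geq0$ and $\mu_j\geq0$. If $j\in\sigma$ then \eqref{eq:p} forces some $(m,j)\notin\tau$, so $w_{(m,j)}=0$ and $z_{(m,j)}=-x_j<0$, whereas $z_{(m',j)}=w_{(m',j)}-x_j>-x_j$ whenever $(m',j)\in\tau$; hence $\mu_j=-x_j<0$. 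Therefore $\sigma=\{j:\mu_j<0\}$, and in both cases $(m,j)\in\tau$ if and only if $z_{(m,j)}>\min(0,\mu_j)$. Both descriptions depend on $z$ alone, so the restricted union is disjoint, which together with (i) proves \eqref{eq:union}.

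The main obstacle is keeping the combinatorics of \eqref{eq:p} straight on both sides: checking that a single simultaneous reduction step already removes every bad position (so that no iteration is needed), and, in the disjointness argument, noticing that at a position $j$ the threshold separating the $\tau$-coordinates from the rest is $0$ when $\mu_j\geq0$ but $\mu_j$ itself when $\mu_j<0$, i.e.\ $\min(0,\mu_j)$. Everything else follows directly from \eqref{eq:Ksigma} and the defining monotonicity property of anti-blocking bodies.
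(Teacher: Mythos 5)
Your proof is correct. The covering half is essentially the paper's argument: at every index where all of $x_j,w_{(1,j)},\dots,w_{(p,j)}$ are positive you subtract the common minimum, using the anti-blocking (down-closedness) property to stay inside $K$ and $L$; the paper runs this reduction index by index starting from the coarse decomposition \eqref{eq:Ksigma} over all pairs $(\tau,\sigma)$, whereas you perform it simultaneously at all bad positions starting from an arbitrary representation of a point, which is the same mechanism in a slightly tidier packaging (the simultaneous step is legitimate since distinct bad positions involve disjoint coordinates). The disjointness half is where you genuinely diverge: the paper assumes a common point of two terms with distinct admissible pairs, locates an index where the pairs differ, and derives a contradiction through a four-case analysis on membership of that index in $\sigma$ and $\sigma'$; you instead show that the pair $(\tau,\sigma)$ is a function of the point $z$ itself, via $\mu_j=\min_m z_{(m,j)}$, with $\sigma=\{j:\mu_j<0\}$ and $(m,j)\in\tau$ exactly when $z_{(m,j)}>\min(0,\mu_j)$. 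Both arguments hinge on the same consequence of \eqref{eq:p} (if $j\in\sigma$ then some $w_{(m,j)}=0$, so the $j$-block of $z$ attains the value $-x_j$), but your reconstruction argument is constructive and shorter, avoiding the case split; the paper's version, on the other hand, needs no explicit recovery formula and stays entirely within the contradiction framework. Your identification of the threshold $\min(0,\mu_j)$ is exactly the point that makes the recovery work, and it is stated and verified correctly.
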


\begin{proof}
By \eqref{eq:Ksigma}, the equation \eqref{eq:union} holds if the union is taken over all subsets $\tau\subset\{1,\dots,pn\}$ and $\sigma\subset\{1,\dots,n\}$. Clearly, for each $\sigma,\tau$ and each $i$, one has
\begin{align}
\label{eq:p+1}
|\tau\cap\{i,i+n,\dots,i+(p-1)n\}|+|\sigma\cap\{i\}|\leq p+1.
\end{align}
Consider $\tau,\sigma$ such that there is equality in \eqref{eq:p+1} for some $i$. Assume first that $i=n$. Take any $x=(x_1,\dots,x_n)\in K_\sigma$ and $y=(y_1,\dots,y_{pn})\in L_\tau$. One has $x_n,y_n,y_{2n},\dots,y_{pn}>0$. Denote $\zeta=\min\{x_n,y_n,y_{2n},\dots,y_{pn}\}$ and $z=(0,\dots,0,\zeta)\in\RR^n$. Then
\begin{align*}
y-\Delta_p x=(y-\Delta_pz)-\Delta_p(x-z)\in  L_{\tilde\tau}-\Delta_pK_{\tilde \sigma},
\end{align*}
where $|\tilde\tau\cap\{n,2n,\dots,pn\}|+|\tilde\sigma\cap\{n\}|\leq p$. Similarly we proceed with other indices $i$ and so \eqref{eq:union} follows.

To prove that the union is disjoint, assume there are distinct pairs $\sigma,\tau$ and $\sigma',\tau'$ satisfying \eqref{eq:p} and points $x\in K_\sigma$, $y\in L_\tau$, $x'\in K_{\sigma'}$, and $y'\in L_{\tau'}$ such that $y-\Delta_px=y'-\Delta_px'$. Then there is  $i\in\{1,\dots,n\}$ such that the pairs of sets
$$\sigma\cap\{i\},\tau\cap\{i,i+n,\dots,i+(p-1)n\}\quad\text{and}\quad\sigma'\cap\{i\},\tau'\cap\{i,i+n,\dots,i+(p-1)n\}
$$
are distinct.  Without loss of generality, assume again $i=n$. Since $y-y'=\Delta_p(x-x')\in\Delta_p\RR^n$ we in particular have
\begin{align*}
y_n-y_n'=y_{2n}-y_{2n}'=\cdots=y_{pn}-y_{pn}'=x_n-x_n'.
\end{align*}
We will distinguish four cases. First, if $n\in\sigma$ and $n\notin\sigma'$, we have $x_n>0=x_n'$. Consequently, $y_{jn}-y'_{jn}>0$ for $j=1,\dots,p$. But because of \eqref{eq:p}, there is $k$ with $y_{kn}=0$ and we have $-y'_{kn}>0$, a contradiction. The second case $n\notin\sigma$ and $n\in\sigma'$ is analogous. Third, if $n\in\sigma$ and $n\in\sigma'$, because of \eqref{eq:p} and because $\tau\cap\{n,2n,\dots,pn\}\neq\tau'\cap\{n,2n,\dots,pn\}$ there is $k$ with $y_{kn}=0<y_{kn}'$ and hence $y_{jn}-y'_{jn}=-y_{kn}'<0$ for $j=1,\dots,p$. But by \eqref{eq:p} again, there is $l\neq k$ with $y_{ln}'=0$ and thus $y_{ln}<0$, a contradiction. Finally, if $n\notin\sigma$ and $n\notin\sigma'$, we have $x_n=x'_n=0$ and thus $y_{jn}=y'_{jn}$ for $j=1,\dots,p$ which is in contradiction with $\tau\cap\{n,2n,\dots,pn\}\neq\tau'\cap\{n,2n,\dots,pn\}$.

\end{proof}

\begin{proof}[Proof of Theorem \ref{thm:main}]
First, let $K,L^{(1)},\dots,L^{(p)}\subset\RR^n$ be anti-blocking convex bodies and put $L=\sum_{i=1}^p\iota_iL^{(i)}$. By Lemma \ref{lem:union},
\begin{align*}
\vol_{pn}(L-\Delta_pK)=\sum_{\tau,\sigma}\vol_{pn}(L_\tau-\Delta_pK_\sigma),
\end{align*}
where the sum is over all subsets $\tau\subset\{1,\dots,pn\}$ and $\sigma\subset\{1,\dots,n\}$ such that
\begin{align*}
|\tau\cap\{i,i+n,\dots,i+(p-1)n\}|+|\sigma\cap\{i\}|= p,\quad i=1,\dots, n.
\end{align*}
Note that the subsets $\tau,\sigma$ for which the inequality \eqref{eq:p} is strict for some $i$ do not contribute to the sum; indeed, if $|\tau|$+$|\sigma|<pn$ then $L_\tau-\Delta_pK_\sigma$ is of dimension strictly less than $pn$ and hence $\vol_{pn}(L_\tau-\Delta_pK_\sigma)=0$. In other words,
\begin{align*}
\vol_{pn}\big(-\Delta_pK+\iota_1 L^{(1)}+\cdots+\iota_p L^{(p)}\big)=\sum_{\sigma_0,\dots,\sigma_p}\vol_{pn}\big(-\Delta_pK_{\sigma_0}+\iota_1 L^{(1)}_{\sigma_1}+\cdots+\iota_p L^{(p)}_{\sigma_p}\big),
\end{align*}
where the sum runs over all $p$-uniform covers $\sigma_0,\dots,\sigma_p$ of $\{1,\dots,n\}$. Consequently,
\begin{align}
\label{eq:sumsigma}
\begin{split}
&V(-\Delta_pK[k],\iota_1K[n-k_1],\dots,\iota_pK[n-k_p])\\
&\quad=\sum_{\sigma_0,\dots,\sigma_p}V(-\Delta_pK_{\sigma_0}[k],\iota_1K_{\sigma_1}[n-k_1],\dots,\iota_pK_{\sigma_p}[n-k_p]),
\end{split}
\end{align}
where the sum runs over all $p$-uniform covers $\sigma_0,\dots,\sigma_p$ of $\{1,\dots,n\}$ satisfying $|\sigma_0|=k$ and $|\sigma_i|=n-k_i$, $i=1,\dots,p$. Fix such a $p$-uniform cover. By Remark \ref{rem:reduction} we may assume $\interior K\neq\emptyset$, $0<k<n$, and $0<k_i<n$ for $i=1,\dots,p$. We will assume this for the rest of the proof.

For $i=1,\dots,p$, consider the orthogonal decomposition $\RR^n=E_{\sigma_i}\oplus E_{\sigma_i^c}$ and the corresponding projection $\pi_i:\RR^n\to E_{\sigma_i^c}$. Denote $\pi=\pi_1\times\cdots\times\pi_p$. Then we have the exact sequence
\begin{align*}
0\longrightarrow E_{\sigma_1}\times\cdots\times  E_{\sigma_p}\stackrel{\iota_1\times\cdots\times \iota_p}{\longrightarrow} (\RR^n)^p\stackrel{\pi}{\longrightarrow}E_{\sigma_1^c}\times\cdots\times  E_{\sigma_p^c}\longrightarrow 0.
\end{align*}
Notice that both $E_{\sigma_0}$ and $E_{\sigma_1^c}\times\cdots\times  E_{\sigma_p^c}$ can be identified with $\RR^k$ in a natural way. Then, since $\sigma_0,\dots,\sigma_p$ is a $p$-uniform cover, the map $\pi\circ(-\Delta_p)|_{E_{\sigma_0}}$ is just a permutation linear operator on $\RR^k$; in particular, it has determinant $\pm1$. Consequently, Lemma \ref{lem:exact} and the same inductive argument as in Remark \ref{rem:reduction} (b) imply
\begin{align*}
&V\big(-\Delta_pK_{\sigma_0}[k],\iota_1K_{\sigma_1}[n-k_1],\dots,\iota_pK_{\sigma_p}[n-k_p]\big)\\
&\quad={pn\choose k,n-k_1,\dots,n-k_p}^{-1}\vol_k(K_{\sigma_0})\prod_{i=1}^p\vol_{n-k_i}(K_{\sigma_i}).
\end{align*}

The convex body
$$
\hat K=\big\{(\epsilon_1x_1,\dots ,\epsilon_nx_n)\in\RR^n\mid x\in K,\epsilon\in\{-1,1\}^n\big\}
$$
satisfies $0\in\interior \hat K$ and $\vol_{|\sigma|}(\hat K\cap E_\sigma)=2^{|\sigma|}\vol_{|\sigma|}(K_\sigma)$ for any $\sigma\subset\{1,\dots,n\}$. Hence, applying Theorem \ref{thm:dualBT} to $\hat K$ we obtain
\begin{align*}
&V(-\Delta_pK_{\sigma_0}[k],\iota_1K_{\sigma_1}[n-k_1],\dots,\iota_pK_{\sigma_p}[n-k_p])\\
&\quad=\frac1{2^{pn}}{pn\choose k,n-k_1,\dots,n-k_p}^{-1}\vol_k(\hat K\cap E_{\sigma_0})\prod_{i=1}^p\vol_{n-k_i}(\hat K\cap E_{\sigma_i})\\
&\quad\leq\frac1{2^{pn}}{pn\choose k,n-k_1,\dots,n-k_p}^{-1}\frac{(n!)^p}{k!(n-k_1)!\cdots(n-k_p)!}\vol_n(\hat K)^p\\
&\quad =\frac{(n!)^p}{(pn)!}\vol_n(K)^p.
\end{align*}
The right-hand side of this inequality clearly does not depend on the fixed $p$-uniform cover. Since there are ${n\choose n-k,k_1,\dots,k_p}$ such covers, plugging the inequality to \eqref{eq:sumsigma} we finally obtain
\begin{align*}
&V(-\Delta_pK[k],\iota_1K[n-k_1],\dots,\iota_pK[n-k_p])\\
&\quad\leq {n\choose n-k,k_1,\dots,k_p}\frac{(n!)^p}{(pn)!}\vol_n(K)^p\\
&\quad={n\choose k}{k\choose k_1,\dots,k_p}\frac{(n!)^p}{(pn)!}\vol_n(K)^p.
\end{align*}

As for the characterization of the equality cases, observe first that an anti-blocking convex body $K$ is a simplex if and only if $K=\conv\{0,c_1e_1,\dots,c_ne_n\}$ for some $c_1,\dots,c_n>0$. So if this is the case, it clearly holds for any 1-uniform cover $\rho_0,\dots,\rho_p$ of $\{1,\dots,n\}$ that $K=\conv\{K\cap E_{\rho_i}\mid i=0,\dots, p\}$. By Theorem \ref{thm:equality}, one then has equality in Theorem \ref{thm:dualBT} for any $p$-uniform cover $\sigma_0,\dots,\sigma_p$ and consequently equality in \eqref{eq:conj2}.

Assume conversely that equality holds in \eqref{eq:conj2}. Then one has
$$
\vol_k(\hat K\cap E_{\sigma_0})\prod_{i=1}^p\vol_{n-k_i}(\hat K\cap E_{\sigma_i})=\frac{(n!)^p}{k!(n-k_1)!\cdots(n-k_p)!}\vol_n(\hat K)^p
$$
for any $p$-uniform cover $\sigma_0,\dots,\sigma_p$ of $\{1,\dots,n\}$ satisfying $|\sigma_0|=k$ and $|\sigma_i|=n-k_i$, $i=1,\dots,p$. Observe that the $1$-uniform cover induced by such a cover is $\sigma_0^c,\dots,\sigma_p^c$. Therefore, according to Theorem \ref{thm:equality}, for any 1-uniform cover $\rho_0,\dots,\rho_p$ of $\{1,\dots,n\}$ with $|\rho_0|=n-k$ and $|\rho_i|=k_i$, $i=1,\dots,p$, it holds that $\hat K=\conv\{\hat K\cap E_{\rho_j}\mid j=0,\dots,p\}$ and consequently also
\begin{align*}
K=\conv\{ K\cap E_{\rho_j}\mid j=0,\dots,p\}.
\end{align*}
Fix one such 1-uniform cover and assume $K$ is not a simplex. Choose a set $\alpha\in\{\rho_0,\dots,\rho_p\}$ such that $K\cap E_\alpha$ is not a simplex. Observe that this is possible since, by the characterization of anti-blocking simplices mentioned above, $\conv\{K\cap E_\zeta,K\cap E_\eta\}$ is a simplex if $K\cap E_\zeta$ and $K\cap E_\eta$ are simplices. Choose further a smallest subset $\alpha'\subset\alpha$ such that $K\cap E_{\alpha'}$ is not a simplex. Notice that $|\alpha'|\geq 2$. Pick arbitrary $a\in\alpha'$ and $b\in\alpha^c$. Then $K\cap E_{\alpha'\setminus\{a\}}$ is a simplex. Consider another 1-uniform cover $\mu_0,\dots,\mu_p$ with $|\mu_0|=n-k$ and $|\mu_i|=k_i$, $i=1,\dots,p$, that contains $\beta:=(\alpha\setminus\{a\})\cup\{b\}$. Let $\gamma\in\{\mu_0,\dots,\mu_p\}$ be the (unique) set containing $a$. Then
\begin{align*}
K\cap E_{\alpha'}&=\conv\{ K\cap E_{\mu_j}\mid j=0,\dots,p\}\cap E_{\alpha'}\\
&=\conv\{ (K\cap E_{\alpha'})\cap E_{\mu_j}\mid j=0,\dots,p\}\\
&=\conv\{K\cap E_{\alpha'}\cap E_\beta,K\cap E_{\alpha'}\cap E_\gamma\}\\
&=\conv\{K\cap E_{\alpha'\setminus\{a\}},K\cap E_{\{a\}}\}
\end{align*}
is a simplex which is in contradiction with the choice of $\alpha'$ and thus finishes the proof. 
\end{proof}

\section{Relation to the Alesker product}

\label{s:Alesker}

\begin{proposition}
\label{pro:equiv}
Conjecture \ref{conj3} is equivalent to the first part of Conjecture \ref{conj2} (without characterization of the equality cases).
\end{proposition}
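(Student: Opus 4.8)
The plan is to reduce the proposition to a routine identity followed by a comparison of combinatorial constants. Throughout I set $k_0=n-k$, so that $k_0+k_1+\cdots+k_p=n$ is precisely a partition as in Conjecture \ref{conj3}, with $\phi_i=(\Cdot[k_i],K[n-k_i])$ and $n-k_0=k$. I first observe that both sides of \eqref{eq:conj2} depend continuously on $K$ in the Hausdorff metric---the mixed volume is continuous in each of $\Delta_pK,\iota_1K,\dots,\iota_pK$, and $K\mapsto\vol_n(K)^p$ is continuous---and that $\calK_+^\infty(\RR^n)$ is dense among convex bodies; hence the first part of Conjecture \ref{conj2} is equivalent to its restriction to $K\in\calK_+^\infty(\RR^n)$. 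For such $K$ we have $\phi_i\in\Val^\infty(\RR^n)$ and $\phi_0\cdots\phi_p\in\Val_n^\infty(\RR^n)=\linspan\{\vol_n\}$, so Conjecture \ref{conj3} simply asserts that the scalar $c'$ with $\phi_0\cdots\phi_p=c'\vol_n$ is at most $\vol_n(K)^p$; the task is to compute $c'$.

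For $\lambda_0,\dots,\lambda_p>0$ consider the smooth valuations $\Psi_{\lambda_i}(L)=\vol_n(L+\lambda_iK)$. Expanding the mixed volume gives $\Psi_{\lambda_i}=\sum_{j=0}^n\binom{n}{j}\lambda_i^{n-j}(\Cdot[j],K[n-j])$, and since distinct tuples $(j_0,\dots,j_p)$ produce distinct monomials $\prod_i\lambda_i^{n-j_i}$, multilinearity of the Alesker product identifies the coefficient of $\prod_i\lambda_i^{n-k_i}$ in $\Psi_{\lambda_0}\cdots\Psi_{\lambda_p}$ with $\bigl(\prod_{i=0}^p\binom{n}{k_i}\bigr)\phi_0\cdots\phi_p$. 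On the other hand, $\lambda_iK\in\calK_+^\infty(\RR^n)$, so the product formula for valuations of the form $\vol_n(\Cdot+A)$---the $(p+1)$-fold analogue of the identity recalled in Section \ref{s:background}, see \cite{AleskerFu2014}---gives, for every convex body $M$,
\begin{align*}
(\Psi_{\lambda_0}\cdots\Psi_{\lambda_p})(M)=\vol_{(p+1)n}\bigl(\Delta_{p+1}(M)+\lambda_0\iota_0K+\cdots+\lambda_p\iota_pK\bigr),
\end{align*}
with $\iota_0,\dots,\iota_p\colon\RR^n\to(\RR^n)^{p+1}$ the inclusions into the factors. Expanding the right-hand side by multilinearity of the mixed volume and extracting the coefficient of $\prod_i\lambda_i^{n-k_i}$---in which $\Delta_{p+1}(M)$ is forced to occur with multiplicity $n=\sum_ik_i$---I obtain, for every convex body $M$, that $\bigl(\prod_{i=0}^p\binom{n}{k_i}\bigr)(\phi_0\cdots\phi_p)(M)$ equals an explicit multinomial coefficient times $V_{(\RR^n)^{p+1}}\bigl(\Delta_{p+1}M[n],\iota_0K[k],\iota_1K[n-k_1],\dots,\iota_pK[n-k_p]\bigr)$.

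It remains to pass from $(\RR^n)^{p+1}$ to $(\RR^n)^p$. To this end I would invoke Lemma \ref{lem:exact} for the exact sequence $0\longrightarrow\RR^n\stackrel{\Delta_{p+1}}{\longrightarrow}(\RR^n)^{p+1}\stackrel{g}{\longrightarrow}(\RR^n)^p\longrightarrow0$, where $g(X_0,X_1,\dots,X_p)=(X_1-X_0,\dots,X_p-X_0)$; as in Remark \ref{rem:reduction}(c), the section $(Y_1,\dots,Y_p)\mapsto(0,Y_1,\dots,Y_p)$ has unit Jacobian, so the induced Lebesgue measure on $(\RR^n)^p$ is the canonical one. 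Since $g(\iota_0x)=-\Delta_p(x)$ and $g(\iota_ix)=\iota_ix$ (inclusion into the $i$-th factor of $(\RR^n)^p$) for $i=1,\dots,p$, Lemma \ref{lem:exact} rewrites the displayed mixed volume as $\binom{(p+1)n}{n}^{-1}V_{(\RR^n)^p}\bigl(-\Delta_pK[k],\iota_1K[n-k_1],\dots,\iota_pK[n-k_p]\bigr)\vol_n(M)$. Since this holds for every $M$ and $\phi_0\cdots\phi_p$ is a multiple of $\vol_n$, collecting the multinomial constants yields
\begin{align*}
\phi_0\cdots\phi_p=\frac{(pn)!\,k_0!\,k_1!\cdots k_p!}{(n!)^{p+1}}\,V_{(\RR^n)^p}\bigl(-\Delta_pK[k],\iota_1K[n-k_1],\dots,\iota_pK[n-k_p]\bigr)\,\vol_n.
\end{align*}
The constant being positive, Conjecture \ref{conj3} for $K$ is equivalent to $V_{(\RR^n)^p}(-\Delta_pK[k],\iota_1K[n-k_1],\dots,\iota_pK[n-k_p])\leq\frac{(n!)^{p+1}}{(pn)!\,k_0!\,k_1!\cdots k_p!}\vol_n(K)^p$; and since $k_0=n-k$ and $\binom{n}{k}\binom{k}{k_1,\dots,k_p}=\frac{n!}{k_0!\,k_1!\cdots k_p!}$, the right-hand side here is exactly that of \eqref{eq:conj2}. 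Combined with the density reduction, this gives the equivalence.

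The difficulty I anticipate is organisational rather than conceptual: keeping the several layers of binomial and multinomial coefficients straight through the two reductions, and being careful about which Lebesgue measure is induced on each quotient space---this is where Lemma \ref{lem:exact} together with the Jacobian computation does the substantive work. The one genuinely external input is the $(p+1)$-fold product formula for valuations $\vol_n(\Cdot+A)$; alternatively, one may cite directly from \cite{KW22} the description of higher-rank mixed volumes as coefficients of Alesker products, in which case the displayed identity for $\phi_0\cdots\phi_p$ is essentially immediate and only the comparison of constants remains.
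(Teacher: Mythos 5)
Your argument is correct, and it arrives at exactly the identity on which the paper's proof rests, namely $\phi_0\cdots\phi_p=\frac{(pn)!\,k_0!\cdots k_p!}{(n!)^{p+1}}\,V\bigl(-\Delta_pK[k],\iota_1K[n-k_1],\dots,\iota_pK[n-k_p]\bigr)\vol_n$, followed by the same density/continuity reduction; your bookkeeping of constants checks out (the factor ${(p+1)n\choose n,\,n-k_0,\dots,n-k_p}{(p+1)n\choose n}^{-1}$ against $\prod_i{n\choose k_i}$ does give the paper's coefficient), and the identifications $g(\iota_0K)=-\Delta_pK$, $g(\iota_iK)=\iota_iK$ and the determinant-one check that the induced measure on $(\RR^n)^p$ is the canonical one are precisely the verifications the paper makes. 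The difference lies in how the identity is obtained. The paper quotes \cite[Proposition~5.4]{KW22}, which expresses the Alesker product directly as a rank-$p$ mixed volume on $\coker\Delta_{p+1}$ with the induced Lebesgue measure, so that only the transport of measure and bodies through the isomorphism $\coker\Delta_{p+1}\cong(\RR^n)^p$, $(x_0,\dots,x_p)\mapsto(x_1-x_0,\dots,x_p-x_0)$, remains. You instead re-derive this special case of that proposition from scratch: the $(p+1)$-fold product formula for valuations $\vol_n(\Cdot+A)$ from \cite{AleskerFu2014}, extraction of the coefficient of $\prod_i\lambda_i^{n-k_i}$ (legitimate, since the monomial determines the tuple $(j_0,\dots,j_p)$), and Lemma~\ref{lem:exact} applied to $0\to\RR^n\stackrel{\Delta_{p+1}}{\to}(\RR^n)^{p+1}\to(\RR^n)^p\to0$ to eliminate the diagonal factor. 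Your route is more self-contained—its only external input is the classical multi-factor product formula rather than the higher-rank mixed volume machinery—at the price of heavier multinomial bookkeeping; the paper's route is shorter and deliberately exhibits the link between the conjectured inequality and higher-rank mixed volumes, which is one of its stated aims.
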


\begin{proof}
Let $K\in\calK^\infty_+(\RR^n)$ and $n=k_0+\cdots+k_p$. Recall that that the cokernel of a linear map $f:X\to Y$ is defined as $\coker f=Y/f(X)$. According to \cite[Proposition 5.4]{KW22}, the Alesker product of valuations $\phi_i=(\Cdot[k_i],K[n-k_i])\in\Val^\infty(\RR^n)$, $i=0,\dots,p$, can be expressed as
$$
\phi_0\cdots\phi_p=\frac{k_0!\cdots k_p!(pn)!}{(n!)^{p+1}}V_{\coker\Delta_{p+1}}\big(f_0(K)[n-k_0],\dots,f_p(K)[n-k_p]\big)\vol_n,
$$
where $f_i:\RR^n\to\coker\Delta_{p+1}$ is the composition of the inclusion $\iota_i:\RR^n\to(\RR^n)^{p+1}$ into the $i$-th factor with the canonical projection $\pi:(\RR^n)^{p+1}\to\coker\Delta_{p+1}$ and the mixed volume $V_{\coker\Delta_{p+1}}$ corresponds to the Lebesgue measure $\vol_{\coker\Delta_{p+1}}$ on $\coker\Delta_{p+1}$ induced by the exact sequence
\begin{align}
\label{eq:exact}
0\longrightarrow\RR^n\stackrel{\Delta_{p+1}}{\longrightarrow} (\RR^n)^{p+1}\stackrel{\pi}{\longrightarrow}\coker\Delta_{p+1}\longrightarrow 0.
\end{align}
The linear map $g:(\RR^n)^{p+1}\to(\RR^n)^p$ given by
$$
g(x_0,\dots,x_p)=(x_1-x_0,\dots,x_p-x_0)
$$
clearly descends to an isomorphism $\overline g:\coker\Delta_{p+1}\to(\RR^n)^p$ satisfying $\overline g\circ\pi=g$. We claim that the pushforward of the Lebesgue measure on $\coker\Delta_{p+1}$ under $\overline g$ is precisely the canonical Lebesgue measure on $\RR^{pn}$. Indeed, the linear map $s=(0,\overline g):\coker\Delta_{p+1}\to(\RR^n)^{p+1}$ splits the exact sequence \eqref{eq:exact} so that we have $(\RR^n)^{p+1}=\im\Delta_{p+1}\oplus s(\coker\Delta_{p+1})$ and therefore $(\Delta_{p+1})_*\vol_n\otimes s_*\vol_{\coker\Delta_{p+1}}=\vol_{(p+1)n}$. Then it is easily seen using the Fubini theorem that $s_*\vol_{\coker\Delta_{p+1}}$ is the canonical Lebesgue measure on $\{0\}\times\RR^{pn}$ or, equivalently, $\overline g_*\vol_{\coker\Delta_{p+1}}$ is the canonical Lebesgue measure on $\RR^{pn}$. Consequently,
\begin{align*}
&V_{\coker\Delta_{p+1}}\big(f_0(K)[n-k_0],\dots,f_p(K)[n-k_p]\big)\\
&\quad=V\big(\overline g(f_0(K))[n-k_0],\dots,\overline g(f_p(K))[n-k_p]\big)\\
&\quad=V\big(g(i_0(K))[n-k_0],\dots,g(i_p(K))[n-k_p]\big)\\
&\quad=V(-\Delta_pK[n-k_0],\iota_1K[n-k_1],\dots,i_pK[n-k_p]),
\end{align*}
proving the equivalence for convex bodies from the class  $\calK^\infty_+(\RR^n)$. The general case of Conjecture \ref{conj2} follows by continuity.
\end{proof}

\begin{remark}
\begin{enuma}
\item The quantity $V_{\coker\Delta_{p+1}}\big(f_0(K)[n-k_0],\dots,f_p(K)[n-k_p]\big)$ is a special case of the mixed volume of rank $p$ defined by the author and Wannerer \cite[Definition 1.3]{KW22}. The proof of Proposition \ref{pro:equiv} together with Propositions \ref{pro:stronger1} and \ref{pro:stronger2} thus relates higher-rank mixed volumes to higher-order difference bodies.
\item Observe that the right-hand side of \eqref{eq:conj3} is a special case of the Alesker product on the left-hand side, corresponding to the partition $n=0+\cdots+0+n$. This suggests there might exist even finer inequalities between the products corresponding to two general partitions.
\item As pointed out to us by A.~Bernig, the case $p=1$ of \eqref{eq:conj3}, when written as 
$$
\vol(K)\,\phi_0\cdot\phi_1\leq\phi_0(K)\phi_1(K)\vol_n,
$$
is formally analogous to an inequality proven by Dang and Xiao \cite[Theorem 2.9]{DangXiao} for the (extended) Bernig--Fu convolution of any two valuations $\phi_0,\phi_1$ from a certain infinite-dimensional positive cone. One might thus ask what is the largest positive cone $\calP\subset\Val^\infty(\RR^n)$ such that for any $\phi_i\in \calP\cap\Val^\infty_{k_i}(\RR^n)$, $i=0,\dots,n$, with $k_0+\cdots+k_p=n$ and for  either a fixed or any convex body $K\subset\RR^n$ it holds that 
$$
\vol(K)\,\phi_0\cdots\phi_p\leq\phi_0(K)\cdots\phi_p(K)\vol_n.
$$
\end{enuma}
\end{remark}

\bibliographystyle{abbrv}
\bibliography{ref}

\end{document}